\newcommand{\R}{\mathbb{R}}
\newcommand{\N}{\mathbb{N}}
\newcommand{\hsm}{\hspace{-.1cm}}
\newcommand{\hsmm}{\hspace{-.05cm}}
\newcommand{\vsm}{\vspace{-.1cm}}
\newcommand{\dis}{\displaystyle}
\newcommand{\noi}{\noindent}
\begin{document}

\title{A Maximum Principle for a Time-Optimal Bilevel Sweeping Control Problem}

\subtitle{}

\author{Fernando Lobo Pereira \and Nathalie T. Khalil}

\institute{Fernando Lobo Pereira,  Corresponding author  \at
              SYSTEC, Department of Electrical and Computer Engineering, Porto University \\
              4200-465 Porto, Portugal\\
              flp@fe.up.pt
           \and
              Nathalie T. Khalil \at
              SYSTEC, Department of Electrical and Computer Engineering, Porto University \\
              4200-465 Porto, Portugal\\
              khalil.t.nathalie@gmail.com
}

\date{Received: date / Accepted: date}

\maketitle

\begin{abstract}
 In this article, we investigate a time-optimal state-constrained bilevel optimal control problem whose lower-level dynamics feature a sweeping control process involving a {\it truncated normal cone}. By bilevel, it is meant that the optimization of the upper level problem is carried out over the solution set of the lower level problem.This problem instance arises in structured crowd motion control problems in a confined space.
 We establish the corresponding necessary optimality conditions in the Gamkrelidze's form. The analysis relies on the {\it smooth approximation} of the lower level sweeping control system, thereby dealing with the resulting lack of Lipschitzianity with respect to the state variable inherent to the sweeping process, and on the {\it flattening} of the bilevel structure via an exact penalization technique. Necessary conditions of
 optimality in the Gamkrelidze's form are applied to the resulting standard approximating penalized state-constrained single-level problem, and the main result of this article is obtained by passing to the limit.
\end{abstract}
\keywords{optimal control theory\and state constraint \and sweeping process \and bilevel optimization \and control of constrained systems \and exact penalization}
\subclass{ 49K15 \and 49K99 \and 49J52}

\section{Introduction}\label{sec: introduction}
In this article, we present and derive a Maximum Principle in which the usual Pontryagin-Hamilton function (cf. \cite{dubovitskii1965extremum}) is replaced by its Gamkrelidze's form (cf. \cite{gamkrelidze1959,arutyunov2011maximum}) for a certain state constrained bilevel optimal control problem featuring a sweeping process at the lower level.
The first formulation of an instance of this problem is the minimum-time optimal control problem of a moving structured crowd in \cite{khalil2019bilevelsweeping} in which existence, and well-posedness are addressed. To the best of our knowledge this is the first time that a maximum principle is derived for this class of problems. The motivation to investigate this class of problems, goes well beyond motion control of structured crowds. Multi-level sweeping control process paradigms constitute a natural framework for the multiple controlled dynamic systems like, for example, operation of teams of drones or robots, in a shared confined space.

Sweeping processes were introduced in the 1970's by Moreau in \cite{moreau1976application} to model elastoplasticity phenomena which involved a differential inclusion of the form\vsm\vsm\vsm
\begin{equation} \dot x(t)\in -N_{C}(x(t))\;\;\mbox{ a.e.}\;t\in[0,T],\label{swp}\vsm\vsm\vsm\end{equation}
where $C$ is a convex set, and $N_{\Omega}(x)$ is the normal cone to a convex set $\Omega\subset \R^n$ at $x$. The sequel of his research led to the so-called {\it catching-up algorithm}, \cite{moreau1999numerical}. Fuelled by a fast expansion of the range of applications, which encompasses systems with hysteresis, nonlinear electric circuits, nonsmooth mechanics, robotics, traffic equilibria, and crowd motion, to name just a few, see \cite{maury2008mathematical,venel2011numerical,castaing2014some,Brogliato2018higher}, this field expanded to very diverse sweeping processes settings. This has been documented in a vast literature on a rather comprehensive body of results in control and optimization encompassing time-varying, \cite{arroud2018maximum,brokate2013optimal}, nonconvex sweeping sets, \cite{Thibault2003,Thibault2008}, dynamics with additive nonlinear controlled drifts, \cite{edmond2005relaxation,colombo2012optimal,colombo2016minimum,de2019optimal,zeidan2020sweeping,colombo2020optimization}, distributed systems, \cite{adly2014evolution}, and systems with trajectories of bounded variation, among others, being these references a very small sample. Moreover, it is worth pointing out that, by considering discrete approximations, the relevance of the developments in \cite{cao2016optimal,hoang2019Euler,cao2019optimal} goes beyond contributions in dynamic optimization theory to include also computational schemes.

In order to provide a flavor of the bilevel optimal control paradigm involving sweeping processes, consider a population confined to a certain closed space - assumed to be a sphere, for simplicity - and trying to exit it in the minimal possible time. The population is moving along a trajectory $y$ prescribed by a coordinator in order to reach the exit. Moreover, the population features a certain {\it structure}, in the sense that it is constituted of groups of individuals each one remaining in a moving set (also a closed sphere), while minimizing their effort to achieve this. In order to model the ``conflict'' between the goal of minimizing the time to reach the exit, and the one of minimizing the effort in order to stay inside the set moving in the direction of the exit, the problem is formulated as a bilevel problem as follows:\vsm
\begin{itemize}
\item Upper level control problem defining the motion direction for each group with the goal of driving the whole population to the exit in minimum time, while all groups remain constrained to the big closed sphere, and
\item Lower level control problem whose dynamics define the motion of the individuals of each group modelled by a sweeping control process with its own control dependent drift term required to stay confined to their moving set while minimizing their control effort to achieve this.
\end{itemize}
Given the complexity of the optimality conditions for the general formulation of this problem, here, we consider only one population group.
Thus, by considering two closed spheres $Q$, and $ Q_1$ in $\R^n$ with radius $R$, and $R_1$, respectively, with $R \gg R_1$, and centered,
respectively, $q_0$, and $y_0$ in $ \R^n$ such that $y_0+ Q_1\subset Q$, our set-up involves two nested optimal control problems:
\begin{itemize}
\item Upper level minimum-time control problem $(P_H(x(0),u_L))$ whose control process $(T,y)$ is such that $y(0) = y_0$ is a given point in $ Q$
    satisfying $Q_1 + y(t) \subset  Q$ $\forall\; t\in [0,T]$ and $y(T) \in \bar E$, being $\bar E$ the exit set to be defined below, $T$ the final time, and $y$ a vector defining the translation of the disk $Q_1$ at each time, while $(x(0),u_L)$ is a solution to the lower level problem.
\item Lower level minimum control effort problem $(P_L(T,y))$ whose control process $(x,u_L)$, is such that $x(0)\in Q_1 + y_0$, and $x(t) \in Q_1 + y(t) $ $\forall\; t\in [0,T]$, where $x \in \R^n$ is the ``representative'' position of the individuals of the group. $(T,y)$ is regarded as a parameter inherited from the upper level problem.
\end{itemize}
Now, we state both optimal control problems in detail.\vsm\vsm\vsm
\begin{eqnarray}(P_H(x(0),u_L)) &\mbox{\hspace{.2cm}}& \mbox{Minimize }  J_H(T,y;x(0),u_L), \nonumber\label{uppercost}\\
\mbox{subject to }&& \dot y(t)= v(t)\quad \mbox{a.e. in }[0,T],\quad  y(0)= y_0 \in Q, \nonumber\\
&& y(T)\in \bar E,\;\bar E=\partial[(E+R_1B_1(0))\cap Q],\; E \subset \partial Q,\label{upperendp}\nonumber\\
&&  v \in \mathcal{V}:= \{v\in L_2([0, T]; \R^n): \, v(t)\in V\},\nonumber\\
&&  Q_1 + y(t) \subset  Q  \quad \forall\,t\in [0,T], \quad \mbox{and} \label{uppersetconst-1}\\
&& (T,y) \mbox{ is such that } \exists \mbox{ a solution } (x(0),u_L) \textrm{ to } (P_L(T,y)).\vsm\vsm\label{lower-level feasibility}
\end{eqnarray}

Here, $y_0$ is the given center of $Q_1$ at the initial time $t=0$, $J_H(T,y;x(0),u_L):= T$ (time-optimal), $V \subset\R^n $ is compact, $E$ is closed and connected, $B_1(0)$ is the closed unit ball in $\R^n$ centered at the origin, $\partial A$ denotes the boundary of the set $A$, and $(P_L(T,y))$ represents the parametric lower level control problem whose dynamics involve a sweeping process. Before pursuing, we remark that $\bar E$ was designed as target set for $y$ to ensure a feasible finite time solution preserving (\ref{uppersetconst-1}).\vsm\vsm
\begin{eqnarray}
(P_L(T,y)) \nonumber &\mbox{\hspace{.2cm}}&\mbox{Minimize } J_L(x(0),u_L;T,y), \label{cost-i}\nonumber\\
\mbox{subject to }&& \dot x(t)\in f(x(t),u(t))- N_{Q_1+y(t)}^{M} (x(t)), \quad x(0) \in Q_1+y_0 \label{sweep-i}\mbox{ a.e. } \\
&& u\in\mathcal{U} := \{ u\in L_\infty([0,T];\R^m) : \, u(t)\in U\},\nonumber\\
&& x(t)\in Q_1+y(t)\quad \forall t\in [0,T],\label{concons-i}\vsm\vsm
\end{eqnarray}

\vspace{-.4cm}
\noindent where, for a given pair $(T,y)$, $\dis J_L(x(0),u_L;T,y)\hsmm :=\hsm \int_{0}^{T}\hsmm\hsm\hsm \left( |u(t)|^2\hsm +\hsmm  |u_0(t)|^2 \right) dt$\footnote{Here, $|u(t)|$, and $|u_0(t)|$ are the finite dimensional norms of the values of $u$ and $u_0$, respectively, as functions in $L_\infty$} (control effort), $f\hsmm :\R^n\hsmm\times\hsmm \R^m \hsmm\to\hsmm\R^n$, $U\hsmm \subset\hsmm \R^m$ is compact, the truncated cone $ N_{Q_1+y(t)}^{M}(z)\hsmm :=\hsmm N_{Q_1+y(t)}(z) \cap  M B_1(0)$, being $N_{Q_1+y(t)}(z)$ the Mordukhovich (limiting) normal cone to the closed set $Q_1+y(t)$ at point $z$ in the sense of \cite{Mordukhovich2006,clarke1990optimization}, with $M\hsmm >\hsmm 0$ being a given constant, and $ u_0\hsmm\in\hsmm\mathcal{U}_0\hsmm :=\hsmm  L_\infty([0,T];[0,1])$ is a control function taking nonzero values only on the set $\{t\hsmm\in\hsmm 0,T]: x(t)\hsmm\in\hsmm\partial [Q_1\hsmm +\hsmm y(t)]\}$, and specified implicitly in the sweeping component of the dynamics by

\vspace{-.6cm}
\begin{equation} - N_{Q_1+y(t)}^{M} (x(t)):=\{\bar f (x,y)(x-y)u_0(t): u_0(t)\in[0,1]\},\label{alternate-Normalcone}\end{equation}

\vspace{-.1cm}
\noindent being $\dis \bar f (x,y) = -\frac{M}{R_1}$ if $x\in\partial [Q_1+y] $, and $ 0 $ otherwise.

Remark that, while $(P_L(T,y))$ is a nonstandard sweeping optimal control problem that depends on the parameter $(T, y)$, $(P_H(x(0),u_L))$, in turn, not only depends on the parameter $(x(0),u_L)$, but also features the constraint (\ref{lower-level feasibility}) whose satisfaction entails the nonemptiness of the solution set of $(P_L(T,y))$.
\begin{remark}
The reason to consider the truncated normal cone at the lower level sweeping process instead of the usual normal cone is to preserve the relevance
of the bilevel structure of the problem. If the normal cone were used in the problem formulation, then the upper level control could always drive arbitrarily the set $Q_1$ to the exit set $E$ without forcing the lower level control system to use an extra control effort to remain feasible, and the bilevel problem would collapse to two independent optimal control problems. Results reported in \cite{Thibault2003,Thibault2008} prove that, under certain conditions, the solutions to a sweeping process formulated with the usual normal cone to a continuously time dependent set, $C(t)$, coincide with those whose normal cone is replaced by $g(t)\partial d_{C(t)}(x)$ for a certain function $g(\cdot)$ reflecting the magnitude of the variation of $C$ with time. Thus, no time independent truncated normal cones are considered.
\end{remark}
Whenever the context is clear, we refer to the upper and the lower level problems as $(P_H)$, and $(P_L)$, respectively, thus, simplifying the presentation by dropping the parameters $(x(0),u_L)$ and $(y,T)$.

For some parameter $(T,y)$, a pair $(x(\cdot), u_L(\cdot))$ is a {\it feasible} (or admissible) control process to $(P_L)$ if $u_L(\cdot)$ is feasible control to $(P_L)$, and $x(\cdot)$ is an arc satisfying the differential inclusion (\ref{sweep-i}), the initial condition, together with (\ref{concons-i}). An {\it optimal solution} to $(P_L)$ is a feasible pair of $(P_L)$ minimizing the value of the cost functional $J_L(x(0),u_L;T,y)$ over all admissible pairs of $(P_L)$. A feasible quadruple of the dynamic control problem $(P_H)$ is the collection of a feasible pair $(T,y)$, and an optimal pair $(x(0),u_L)$ to $(P_L)$. The feasible quadruple $(T,y;x(0),u_L)$ is optimal to $(P_H)$ if $(T,y;x(0),u_L)$ if it minimizes the value of $J_H(T,y;x(0),u_L)$ among all admissible strategies of $(P_H)$.
Given our set-up, the state constraints (\ref{uppersetconst-1}), and (\ref{concons-i}) can be respectively expressed by inequality constraints $h_H(y(t))\hsm\leq\hsm 0 $, and $ h_L(x(t),y(t)) \hsm\leq\hsm  0 $, where, by denoting  the Euclidean norm by $|\cdot|$,\vsm\vsm\vsm
\begin{eqnarray} \label{eq:expression of h_H}h_H(y)&:= &\frac{1}{2}\left( |y-q_0|^2-(R-R_1)^2\right), \mbox{ and}\vsm\vsm \\
\label{eq:expression of h_L} h_L(x,y)&:=& \frac{1}{2}\left( |x- y|^2-R_1^2\right). \end{eqnarray}
We emphasize that the presence of state constraints is the reason why we chose the Gamkrelidze form of the Maximum Principle, \cite{gamkrelidze1959,arutyunov2011maximum} instead of the more common Dubovitskii-Myliutin form, \cite{dubovitskii1965extremum}. While, in the latter, the multipliers associated with the state constraints are measures, in the former, they are the much more regular monotonic functions of bounded variation. The price to pay for this extra regularity is that the functions specifying the inequality state constraints have to be $C^2$. This assumption clearly holds for our problem. Moreover, as shown in \cite{AKP2017,KP2019}, if the problem's data satisfies additional controllability, and regularity assumptions, then these multipliers turn out to be continuous. This property is critical to ensure the efficiency of numerical algorithms based on indirect methods using necessary conditions of optimality, a fact that was exploited in \cite{CKKP2021,PCDDKS2021}.

The article is organized as follows. In section \ref{sec: main theorem}, the assumptions on the data of the problem, and the main theorem - necessary optimality conditions in the Gamkrelidze's form of the time-optimal bilevel sweeping control problem defined by $(P_H)$ and $(P_L)$ - are stated.
In Section \ref{sec:proof}, the proof of the main theorem is presented in detail with proofs of auxiliary results left in the appendix. The last section, \ref{sec: conclusion}, provides some conclusion and outlines future avenues of this work.

{\bf Notation.} $\partial^P \varphi$, $\partial \varphi$, and $\partial^C \varphi$ denote, respectively, the proximal, the Mordukhovich (limiting), and the Clarke subdifferentials of the function $\varphi$. If $\varphi$ is locally Lipschitz, then $\partial^C \varphi= \textrm{co} \partial \varphi$, where ``$\textrm{co} A$'' denotes the closure of the convex hull of the set $A$. Nonsmooth analysis concepts, results and tools can be consulted in \cite{Mordukhovich2006,clarke1990optimization,vinter2010optimal}. $AC([0,T];\R^n)$ stands for the set of $\R^n$ valued absolutely continuous functions on $[0,T]$, while $N\hspace{-.03cm}BV([0,T];\R)$ is the set of nonnegative valued scalar functions of bounded variations on $[0,T]$, and $\|\cdot\|_{TV}$ is the total variation.

\section{Main Theorem}\label{sec: main theorem}
Our main result - necessary conditions of optimality in Gamkrelidze's form for the bilevel optimal control problem formulated in the previous
section - of this article is stated in this section. We start by stating and discussing the presenting assumptions $H1-H6$, and, then, move to the partial calmness assumption required to the application of the exact penalization technique used in the proof.

The data of $(P_H)$ and of $(P_L)$ satisfies the following standing assumptions:

\vsm\vsm\vsm
\begin{itemize}
\item[H1] $f(x,\cdot)$ is Borel measurable $\forall\,x\in \R^n$, $f(\cdot,u)$ is Lipschitz continuous with constant $K_f$ for all $u\in U$, and there exists a constant $M_1>0$ such that, for all $(x,u) \in \R^n\hsm \times\hsm U$, $|f(x,u)| \le M_1$.
\item[H2] $f(x,U)\subset\R^n$ is a closed and convex set for each $x$.
\item[H3] The control constraint sets $U$, and $V$ are compact and convex.
\item[H4] There exists $\delta > 0$ such that $\delta B_1(0) \subset f(x,U),\, \forall\, x\in \R^n$.
\item[H5] The constant $M > 0$ specifying the truncation level of the normal cone has to satisfy $\overline M> M > \overline m$ where, $ \forall \, \zeta\in  N_{Q_1+y(t)}(x(t))$, $\forall \, t\in  [0,T]$ such that $ x(t)\in \partial (Q_1+ y(t))$,\vsm\vsm\vsm
\begin{equation} \overline M\hsmm :=\hsmm \min_{|\zeta|=1}\left\{\max_{u\in U}\{\langle \zeta, f(x(t),u)\rangle\}\hsmm -\hsmm \min_{v\in V}\{\langle\zeta, v\rangle\}\right\}\hsmm,\mbox{ and }\overline m\hsmm :=\hsmm \max_{|\zeta|=1}\left\{\min_{u\in U}\{\langle \zeta, f(x(t),u)\rangle\}\hsmm - \hsmm \max_{v\in V}\{\langle\zeta, v\rangle\}\right\}.\nonumber	\end{equation}
\item[H6] The set ${\bf \Psi}:=\{(T,y,x,u): (T,y)$ is feasible to $ P_H(x,u)$, and $(x,u)$ is feasible to $P_L(T,y)\}$ is nonempty, and $(T^*,y^*,x^*,u^*)$ is not an isolated point in ${\bf \Psi} $.
\end{itemize}

\begin{remark}\label{remark2} Assumption H4, and the need to consider a truncated cone in the lower level sweeping dynamics as defined in H5 are crucial to preserve the bilevel structure of the problem. Given the data of our problem, it is straightforward to conclude that such $M > 0$ exists since $\overline M(x(t),y(t))\hsmm >\hsmm \overline m(x(t),y(t)) $ for all $t  \hsmm\in\hsmm [0,T]$. Moreover if, for some $t  \hsmm\in\hsmm [0,T]$, $M \hsmm >\hsmm\overline M(x(t),y(t))$, then, for any $v(t)\hsmm\in\hsmm V$, the pair $(f, Q_1+y(t))$ is strongly invariant, in a certain neighborhood of $t$ and the lower level solution set would be an inactive constraint to the upper level problem in this neighborhood. On the other hand, if $M\hsmm  <\hsmm \overline m (x(t),y(t))$ and $x(t)\hsmm\in\hsmm \partial (Q_1+y(t))$, then feasible solutions to the lower level problem from $t$ onwards may not exist, and the set of feasible control processes to $(P_H)$ is empty.
\end{remark}
\begin{remark}\label{remark2+} Assumption H4 is very strong as it amounts to full controllability of the drift component of the dynamics. We can dispense with it by considering the following assumption

\noindent $[\overline{ \mbox{H4}}] $ Velocity sets $f(x,U)$ and $V$ are such that $\exists$ a constant $M\hsmm >\hsmm 0$ satisfying the conditions in H5.

\end{remark}
\begin{remark}\label{remark3}  The first part of assumption H6 amounts to the natural existence of compatible feasible control processes to problems at both levels. The second part is also quite natural, and it is required for technical reasons involving the used proof technique.
\end{remark}

For a $(T,v)$ admissible to $(P_H)$, let $\varphi:\R \hspace{-.05cm}\times\hspace{-.05cm} L_2([0,T],\R^n) \to \R$ be the value function of $(P_L)$ on the functional parameters of $(P_H)$.
\begin{equation}  \dis \varphi(T\hsmm,v)=\hsm\min_{(x(0),u_L) \mbox{ feasible to }(P_L)} \hsm\left\{\hsmm J_L(x(0),u_L;T,y)\hsm :  y(t)\hsmm =\hsmm y_0\hsmm  +\hsmm \int_0^t \hsmm \hsm v(s)ds \hsmm \right\}. \label{value function-upper}\end{equation}

Some considerations are in order. Given a pair $(T,y)$ for $(P_H)$, let $ \Phi_L (T,y)$ be the set of feasible control processes for $(P_L)$. Due to the specific structure of the overall problem, notably, the state constraint (\ref{eq:expression of h_L}), and the hypotheses on its data, we have that, $\forall\;(T,y)$ such that $ \Phi_L (T,y)\neq\emptyset$\footnote{As mentioned before, the truncated normal cone in the dynamics of $(P_L)$ entails that the complementary set of such $(T,y)$'s is not empty.}, the set-valued mapping $\Phi_L(\cdot,\cdot)$ is Hausdorff Lipschitz continuous, and, thus, the finite valued map $(T,v)\to \varphi(T,v)$ is Lipschitz continuous in the topology induced by a $\R\hsmm\times\hsmm L_2([0,T],\R^n)$-norm.
By using the value function $\varphi$, the articulation of $(P_H)$ and $(P_L)$ can be expressed in the flattened optimal control problem:
\begin{eqnarray}(P_F)\mbox{ Minimize }&\mbox{\hspace{.2cm} }& \nonumber  J_H(T,y,x(0),u_L):=T,  \\
\mbox{subject to }
&&\dot y= v, \;\;  \dot x\in f(x,u) - N^M_{Q_1+y}(x)\qquad [0,T]-\mbox{a.e.},\label{dynamics-F}\\
&& y(0)= y_0, \; \; y(T) \in \bar E, \;\; x(0) \in Q_1 + y_0, \nonumber \\
&&  v \in \mathcal{V}, \;\; u\in \mathcal{U}, \quad  h_H(y) \le 0 , \; \; h_L(x,y) \le 0  \;\; \mbox{on }[0,T],\;\;\mbox{ and} \nonumber \\
&& \int_0^T\hsm |u_L(t)|^2dt -\varphi(T,v) \leq 0.\label{eq:value function ineq}
\end{eqnarray}
Here, and from now on, $  |u_L(t)|^2$ stands for $ |u(t)|^2+  |u_0(t)|^2$ as in the specification of the cost function of $(P_L(T,y))$.
Before pursuing, consider that for a quadruple $(T, y,x,u_L)$ jointly feasible for $(P_H)$, and $(P_L)$, we have the following result.

\begin{proposition}\label{prop: existence of solution bilevel}
Assume that H1-H6 hold. Then, there exists a solution to $(P_F)$.	
\end{proposition}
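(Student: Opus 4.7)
The plan is to apply the direct method of the calculus of variations. Let $T^{\ast}:=\inf\{T:(T,y,x,u_L)\text{ is feasible for }(P_F)\}$; by H6 the feasible set is nonempty, so $T^{\ast}$ is finite and nonnegative. I would choose a minimizing sequence $\{(T_n,y_n,x_n,u_n,u_{0,n})\}_{n\ge 1}$ of feasible quintuples with $T_n\downarrow T^{\ast}$. To accommodate the variable horizon, I would extend each arc constantly on $[T_n,\bar T]$, with $\bar T:=T_1$, so that all data live on the same compact interval without altering the constraints.

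Next I would extract convergent subsequences. The state constraints $h_H(y_n)\le 0$ and $h_L(x_n,y_n)\le 0$ confine $y_n$ to a ball of radius $R-R_1$ around $q_0$ and $x_n$ within $R_1$ of $y_n$, so both families are uniformly bounded. The bounds $|\dot y_n|\le\max_{v\in V}|v|$ and $|\dot x_n|\le M_1+M$ (from H1, H3, and the truncation level in H5) give equi-Lipschitzness, so Arzel\`a--Ascoli yields, along a subsequence, uniform convergence $y_n\to y^{\ast}$ and $x_n\to x^{\ast}$ on $[0,\bar T]$. Since $v_n$, $u_n$, $u_{0,n}$ are bounded in $L_\infty$, further subsequences converge weakly in $L_2$ to some $v^{\ast}$, $u^{\ast}$, $u_0^{\ast}$; by Mazur's lemma and convexity of $V$, $U$, and $[0,1]$ (H3), these limits take values in the respective sets a.e.

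With these limits in hand, I would verify that $(T^{\ast},y^{\ast},x^{\ast},u_L^{\ast})$ is feasible for $(P_F)$. The boundary condition $y^{\ast}(T^{\ast})\in\bar E$ follows from closedness of $\bar E$ combined with $T_n\to T^{\ast}$ and uniform convergence of $y_n$; the inclusion $x^{\ast}(0)\in Q_1+y_0$ is immediate. Continuity of $h_H$ and $h_L$ preserves the pointwise state constraints. The linear ODE $\dot y=v$ passes to the limit via weak-strong convergence. For the sweeping inclusion I would use the explicit representation (\ref{alternate-Normalcone}) that substitutes the truncated normal cone by $\bar f(x,y)(x-y)u_0$ with $u_0\in[0,1]$; the resulting right-hand side is convex-valued (by H2 and linearity in $u_0$) and uniformly bounded, so a standard convergence theorem for differential inclusions with convex images delivers the inclusion for the limit quadruple $(x^{\ast},u^{\ast},u_0^{\ast},y^{\ast})$ a.e.\ on $[0,T^{\ast}]$.

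The main obstacle is passing to the limit in the value-function constraint (\ref{eq:value function ineq}). Weak $L_2$ lower semicontinuity of the squared $L_2$ norm yields
\[
\int_0^{T^{\ast}}|u_L^{\ast}(t)|^2\,dt\le\liminf_{n\to\infty}\int_0^{T_n}|u_{L,n}(t)|^2\,dt.
\]
For the right-hand side I would invoke the Hausdorff Lipschitz continuity of the lower-level feasible-set map $\Phi_L(\cdot,\cdot)$ recorded just after (\ref{value function-upper}); this transfers to Lipschitz continuity of $\varphi$ in $(T,y)$ with respect to the sup-norm on $y$, and since $T_n\to T^{\ast}$ and $y_n\to y^{\ast}$ uniformly, we obtain $\varphi(T_n,v_n)\to\varphi(T^{\ast},v^{\ast})$. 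Combining the two passages gives the limit inequality $\int_0^{T^{\ast}}|u_L^{\ast}|^2\,dt\le\varphi(T^{\ast},v^{\ast})$, so $(T^{\ast},y^{\ast},x^{\ast},u_L^{\ast})$ is feasible with cost equal to $T^{\ast}$, thereby proving existence.
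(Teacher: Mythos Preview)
Your approach differs substantially from the paper's: the paper does not argue directly but simply invokes two external existence results---Theorem~1 of \cite{de2019optimal} for the lower-level sweeping problem $(P_L)$, and Theorem~4.3 of \cite{khalil2019bilevelsweeping} for the flattened problem $(P_F)$---whereas you attempt a self-contained direct-method argument. Your route has the advantage of being self-contained; the paper's has the advantage of delegating the delicate sweeping-process compactness issues to a reference built for that purpose.

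Your outline is largely sound, but there is a gap at the step where you assert that the weak $L_2$ limits $u^{\ast},u_0^{\ast}$ together with the uniform limit $x^{\ast}$ satisfy the sweeping dynamics. Weak convergence of controls does not in general preserve a differential equation unless the dynamics are affine in the control, and no such structure is assumed for $f(\cdot,u)$. The standard remedy---show $\dot x^{\ast}(t)\in f(x^{\ast}(t),U)-N^M_{Q_1+y^{\ast}(t)}(x^{\ast}(t))$ via upper semicontinuity and convexity of the right-hand side, then extract measurable controls by a Filippov selection---produces controls $\tilde u,\tilde u_0$ that need not coincide with your weak limits, so the lower-semicontinuity bound $\int|u_L^{\ast}|^2\le\liminf\int|u_{L,n}|^2$ no longer attaches to the pair that actually generates $x^{\ast}$, and the value-function inequality is left hanging. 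A clean repair is to note that once \emph{some} admissible process for $P_L(T^{\ast},y^{\ast})$ is available, a separate existence argument for $(P_L)$ furnishes an optimal lower-level pair $(\hat x,\hat u_L)$ with $\int|\hat u_L|^2=\varphi(T^{\ast},v^{\ast})$, and then $(T^{\ast},y^{\ast},\hat x,\hat u_L)$ is feasible for $(P_F)$ with cost $T^{\ast}$. But that separate existence step is precisely what \cite{de2019optimal} supplies, so in the end you need the same ingredient the paper cites.
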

This is a direct consequence of merging  \cite[Theorem 4.3]{khalil2019bilevelsweeping} with \cite[Theorem 1]{de2019optimal}. Indeed, while
\cite[Theorem 1]{de2019optimal} yields the existence of an optimal solution to  $(P_L)$ if it has at least one admissible process, \cite[Theorem 4.3]{khalil2019bilevelsweeping} guarantees the existence of an optimal solution to $(P_F)$.

Problem $(P_F)$ presents two key challenges: the dynamics (\ref{dynamics-F}), and, the inequality constraint (\ref{eq:value function ineq}). Indeed, the dynamics (\ref{dynamics-F}) are not Lipschitz continuous in $x$, and, thus, standard necessary optimality conditions are not applicable. The difficulty associated with inequality (\ref{eq:value function ineq}) results from the fact that constraint qualifications, such as Mangasarian-Fromovitz or linear independence constraint qualifications, are too strong to hold. While the first challenge is addressed by considering a proper sequence of approximating problems with dynamics Lipschitz continuous in $x$, the second one requires an additional assumption. Like in \cite{ye1995optimality,ye1997optimal}, we consider the partial calmness property which allows to derive an equivalent problem without this constraint (\ref{eq:value function ineq}) but whose cost functional features the associated exact penalization term.
\begin{remark}\label{remark2}
Nondegeneracy of the necessary conditions of optimality due to the presence of state constraints of the type (\ref{eq:expression of h_H}), and (\ref{eq:expression of h_L}) has long drawn the attention of researchers \cite{A1985,A1989,clarke1990optimization,AS1995,vinter2010optimal}. These references, and \cite{AK2020,AK2016,Mordukhovich2006} reveal that the type and nature of regularity and controllability assumptions on the data of the problem to ensure nondegeneracy has been increasing in sophistication. Given the specific nature of the data of $(P_F)$, one immediately concludes that constraints (\ref{eq:expression of h_H}), and (\ref{eq:expression of h_L}) do not entail the degeneracy of the conditions of the main result of this article.
\end{remark}
Now, we recall the concept of partial calmness. Let $(T^*,y^*,x^*,u_L^*,v^*)$ be a minimizer to $(P_F)$.
\begin{definition}[Partial Calmness] \label{def:partial calmness} $(P_F)$ is called partially calm at $(T^*,y^*,x^*,u_L^*,v^*)$ with modulus $\rho$ if $\exists\; \rho \ge 0$ such that, for any $(T,y,x,u_L,v)$ feasible to $(P_F)$, the following inequality holds
\begin{equation}  J_H(T,y;x(0),u_L) \hsmm\geq\hsmm J_H(T^*,y^*;x^*(0),u_L^*)\hsmm -\hsmm \rho  \hsmm \left(\hsmm\int_0^T\hsm\hsm |u_L(s)|^2ds\hsmm -\hsmm \varphi(T,v)\hsmm\right).\nonumber\end{equation}
\end{definition}
Note that partial calmness is not really a constraint qualification as it compounds properties of the lower level solution set with the properties of the upper problem cost functional. Moreover, for our problem, this issue is compounded by the existence of state constraints jointly on both levels. As noted in \cite{mehlitz2021noteonpartialcalmness,henrion2011calmness}, the partial calmness is, in general, a very restrictive property. However, it has been shown in \cite[Proposition 5.1]{ye1995optimality} that the partial calmness property follows from the fact that the solution to the lower level problem is a uniformly weak sharp minimum. Now, we cast this property in terms of our problem's data. Let $(T^*,y^*,x^*,u_L^*)$ be a solution to $(P_F)$.
\begin{definition}[Uniform Weak Sharp Minimum] \label{def:UWSM} The lower level optimal control problem has a uniform weak sharp minimum around $(x^*(0),u_L^*)$ at $(T^*,y^*)$, if $\exists$ $\delta > 0 $, and $\alpha > 0$ such that\vsm\vsm
\begin{equation}  J_L(x(0),u_L;T,y) - \bar\varphi(T,y)  \geq \alpha d_{\Psi(T,y)}(x(0),u_L),\nonumber\vsm\vsm\end{equation}
$\forall\; (T,y)\in\delta B_{\R\times AC}(T^*,y^*)$, and $\forall\; (x(0),u_L)\in\delta B_{\R^n\times L_\infty}(x^*(0),u_L^*)$, being $\dis d_A(a):= \inf_{\bar a\in A} \{| a-\bar a|\}$ the distance of the point $a$ to the set $A$.\

\noindent Here, $\bar\varphi$ is the value function of the lower level problem $\varphi$ defined above but now as a function of $(T,y)$ instead of $(T,v) $ with $\dis y(t)=y_0+\int_0^t v(\tau)d\tau$, $\Psi(T,y)$ is the solution set of the lower level problem for any pair $(T,y)$ such that $(T,y,x,u_L)\in{\bf \Psi}$ and $B_{X\times Y}(a,b)$ is the unit ball in $X\times Y$ centered in $(a,b)$.\vsm\vsm
\end{definition}

It can easily be shown that, for our problem that, if $(T^*,y^*,x^*,u_L^*,v^*)$ is a minimizer of $(P_F)$, then Definition \ref{def:UWSM}  is satisfied.
Take $(T^*,y^*,x^*,u_L^*,v^*)$ to be a minimizer of $(P_F)$. From $H6$, there are processes $(T,y,x(0),u_L)$, and $(T,y,\bar x(0),\bar u_L)$ in a $\varepsilon$-neighborhood (in the appropriate spaces) of $(T^*,y^*,x^*(0),u_L^*)$ for any arbitrary small $\varepsilon\hsmm >\hsmm 0$ such that $$\vsm J_L(x(0),u_L;T,y)\hsmm > \hsmm J_L(\bar x(0),\bar u_L;T,y).\vsm$$ Moreover, for a given $\varepsilon$, we have, also from $H6$, that, for some $\delta\hsmm >\hsmm 0$, we may take the above pairs to be arbitrary such that $(T,y)\hsm \in\hsm \delta B_{\R\times AC}(T^*,y^*)$, and both $(x(0),u_L)$, and $(\bar x(0),\bar u_L)$ are in $\delta B_{\R^n\times L_\infty}(x^*(0),u_L^*)$. Note also that, from the continuity of the cost functional, and the compactness of the feasible set of $P_L(\cdot,\cdot;T,y)$, as well as the continuity of the joint state constraint on $x$ and $y$, the existence of a solution is clear. Let us choose $(\bar x(0), \bar u_L)$ as above to be a solution to $P_L(\cdot,\cdot;T,y)$, i.e., $\dis \bar \varphi(T,y) \hsm =\hsm \int_0^T\hsm |\bar u_L(\tau)|^2 d\tau$. Now, by examining the structure of the lower level optimal control problem, it is also clear that $\exists\;\gamma \hsmm > \hsmm 0$ such that $\|u_L\hsmm -\hsmm \bar u_L\| > \gamma \|(x(0),u_L)\hsmm -\hsmm (\bar x(0),\bar u_L)\|$, and, as a consequence, also some $\beta\hsmm >\hsmm 0$ such that $ \|u_L\|^2\hsmm -\hsmm \|\bar u_L\|^2  \geq \beta \| u_L\hsmm -\hsmm \bar u_L \| $ $\forall\; u_L$ satisfying the above conditions. Thus, by choosing $\alpha\hsmm =\hsmm \beta\gamma $, we have
\vspace{-.3cm}
$$ J_L(x(0),\hsmm u_L\hsmm;T,y) - J_L(\bar x(0),\hsmm \bar u_L\hsmm;T,y) = \|u_L\|^2\hsm -\hsmm\|\bar u_L\|^2\geq \beta \| u_L\hsmm-\bar u_L \| \geq \alpha d_{\Psi(T,y)}(x(0), u_L). $$

\vspace{-.6cm}
In order to state the necessary optimality conditions in the Gamkrelidze's form for the bilevel optimal control problem $(P_H)$ articulated with $(P_L)$ (or equivalently the single level problem $(P_F)$), let us define

\vspace{-1cm}
\begin{eqnarray}
H_H(y,x,v,u,q_H,q_L,\nu_H,\nu_L, r)&:=& \langle q_H\hsmm - \hsmm\nu_H(y\hsmm -\hsmm q_0),v\rangle\hsmm +\hsmm \nu_L \langle x\hsmm -\hsmm y,v\rangle\hsmm - \hsmm r|u|^2 \nonumber\\
&& \hspace{1cm} +\langle q_L\hsmm-\hsmm \nu_L(x\hsmm -\hsmm y),f(x,u)\rangle\hsmm +\hsmm \sigma(y,x,q_L,\nu_L,r), \nonumber\vsm
\label{expression of higher level hamiltonian}\end{eqnarray}
\vspace{-1cm}

\noindent where $y,\, x,\,v,\, q_H,\, q_L $ take values in $\R^n$, $u$ in $ \R^m$, $\nu_H,\,\nu_L$ in $\R$, and $ r$ is a nonnegative scalar, and, by using the alternative truncated normal cone expression in (\ref{alternate-Normalcone}) $\sigma(y,x,q_L,\nu_L, r)=0$ if $|x-y| < R_1$ or

\vspace{-.6cm}
$$\dis \sigma(y,x,q_L,\nu_L, r) = \sup_{u_0\in [0,1]} \left\{\langle q_L\hsm -\hsm\nu_L(x-y),\bar f(x,y)(x-y)u_0\rangle- r u_0^2\right\}.$$
\vspace{-1cm}

\noindent Observe that, for $|x-y|=R_1$, function $\sigma(y,x,q_L,\nu_L, r)$ is explicitly expressed by

\begin{equation} \label{sigma} \sigma(y,x,q_L,\nu_L, r) = \left\{ \begin{array}{ll} 0 & \mbox{ if } \tilde\sigma (q_L,\nu_L, x,y, R_1)\leq 0\\
\frac{1}{r}\frac{M^2}{4R_1^2}\tilde\sigma^2(q_L,\nu_L, x,y, R_1) & \mbox{ if } 0 <\tilde\sigma(q_L,\nu_L, x,y, R_1)  \leq r\frac{2R_1}{M}\\
\frac{M}{R_1}\tilde \sigma^2(q_L,\nu_L, x,y, R_1) - r & \mbox{ if } \tilde\sigma(q_L,\nu_L, x,y, R_1)  \geq r\frac{2R_1}{M},
\end{array}\right. \end{equation}
where $\tilde\sigma(q_L,\nu_L, x,y, R_1):= \nu_LR_1^2-\langle q_L,x-y\rangle$.

Now, we state our necessary conditions of optimality to $(P_F)$.

\vspace{-.2cm}

\begin{theorem}\label{theorem:main thm} Let H1-H6 hold, and $(T^*,y^*,x^*,v^*,u^*)$ be a solution to $(P_F)$. Then, there exists a set of  multipliers $(q_H,q_L,\nu_H,\nu_L,\lambda, r)$, and a constant $ c\hsmm\in\hsmm \R$, with $q_H$, and $q_L$ in $AC([0,T^*];\R^n)$, non-increasing $\nu_H$, and $\nu_L$ in $N\hsmm BV([0,T^*];\R)$, being $\nu_H$ constant on $\{t\hsm\in\hsm[0,T^*]\hsm:\hsm |y-z|\hsm >\hsm R_1\,
\forall z\hsm\in \hsm \partial Q\}$, and $\nu_L$ constant on $\{t\hsm\in\hsm[0,T^*]\hsm:\hsm |y-x|\hsm <\hsm R_1\}$, and non-negative numbers $\lambda$, $r$, with $r\hsmm =\hsmm \lambda\rho $, being $\rho$ the partial calmness modulus, satisfying the following conditions:

\vspace{-.2cm}
\begin{itemize}
		\item[1.] Nontriviality. $ \|(q_H,q_L)\|_{L_\infty} + \|(\nu_H,\nu_L)\|_{TV}+\lambda + r \neq 0.$ \vspace{.1cm}
		\item[2.] Adjoint equations.
\vspace{-.3cm}
		\begin{eqnarray}
		&&\hspace{-1cm}  \nonumber-\dot q_H(t) \in \partial_y H_H(y^*,x^*,v^*,u^*,q_H,q_L,\nu_H,\nu_L, r) = - ( \nu_H(t) +\nu_L(t))v^*(t)+ \nu_L(t)f(x^*(t),u^*(t))\\
		&& \hspace{6cm}+ \partial_y \sigma(y^*(t),x^*(t),q_L(t),\nu_L(t),r)\; \;\; [0,T^*]\hsmm -\hsmm\mbox{a.e.},\label{adjoint_H}\\
     	&& \hspace{-1cm} \nonumber -\dot q_L(t)\in  \partial_x H_H(y^*,x^*,v^*,u^*,q_H,q_L,\nu_H,\nu_L, r)  =  \partial_x\langle q_L(t)-\nu_L(t)(x^*(t)- y^*(t)),f(x^*(t),u^*(t)) \rangle\\
		&& \hspace{4.5cm}+ \nu_L(t) v^*(t)\hsmm +\hsmm \partial_x \sigma(y^*(t),x^*(t),q_L(t),\nu_L(t),r) \;\; [0,T^*]\hsmm -\hsmm\mbox{a.e.}. \label{adjoint_L}\end{eqnarray}
\vspace{-1cm}
		\item[3.] Boundary conditions.
\vspace{-.3cm}
		\begin{eqnarray*} &&\hspace{-1cm}  q_H(0)\hsmm\in\hsmm\R^n, \;\;\;\; q_H(T^*) \hsmm \in\hsmm - N_{\bar E}(y^*(T^*)) \hsm +\hsm\nu_H(T^*)(y^*(T^*)\hsm -\hsm q_0)\hsm  -\hsm \nu_L(T^*)(x^*(T^*)\hsm -\hsm y^*(T^*)),\nonumber \\
        && \hspace{-1cm}q_L(0)\in  N_{Q_1+y_0}(x^*(0)) + \nu_L(0)(x^*(0)-y_0), \;\;\;\;  q_L(T^*)  \hsmm = \hsmm \nu_L(T^*)(x^*(T^*)-y^*(T^*)). \nonumber
		\end{eqnarray*}
\vspace{-.9cm}
		\item[4.] Conservation law.
\vspace{-.3cm}
		\begin{equation} H_H(y^*,x^*(t),v^*(t),u^*(t),q_H(t),q_L(t),\nu_H(t),\nu_L(t), r) = \lambda + r c, \;  \forall t\in [0,T^*]. \label{time-transversality}\end{equation}
\vspace{-.9cm}
		\item[5.] Maximum condition on the lower level control. $u^*(t)$ maximizes on $U$, $[0,T^*]$-a.e., the mapping
\vspace{-.3cm}
		\begin{equation}\hspace{-.3cm} u\to  \langle q_L(t)\hspace{-.03cm} - \hspace{-.03cm} \nu_L(t)(x^*(t)\hspace{-.03cm} - \hspace{-.03cm} y^*(t)),f(x^*(t),u)\rangle - r |u|^2.
		\label{max-u}\end{equation}
\vspace{-.9cm}
		\item[6.] Maximum condition on the upper level control.
\vspace{-.3cm}
		\begin{equation}  -q_H(t)+\nu_H(y^*-q_0)-\nu_L (x^*- y^*)\in  -r \partial_v^C\varphi(T^*,v^*)+  N_{\cal V}(v^*). \label{max-vw}\end{equation}
	\end{itemize}
\vspace{-.3cm}

\noindent Now, we give an explicit expression for $\partial_v^C\varphi(T^*,v^*)$ in condition 6. Denote by $\Psi (T,y)$ the set of solutions to $(P_L)$ for a given feasible pair $(T,y)$, and by $H_L$ its Hamilton-Pontryagin function given by

\vspace{-.9cm}
\begin{eqnarray*}
H_L(y, x, v,u,\bar p,\bar \mu, \bar \lambda) & := & \langle p_H\hsmm -\hsmm\mu_H (y- q_0), v\rangle\hsmm +\hsmm\mu_L\langle x-y,v \rangle\\
&& \hspace{2cm} +\langle p_L\hsm -\hsm \mu_L(x\hsm -\hsm y), f(x, u)\rangle\hsm -\hsm\bar\lambda |u|^2 + \bar \sigma(y,x,p_L,\mu_L, \bar \lambda), \nonumber
\end{eqnarray*}

\vspace{-.6cm}
\noindent where $y$, $x$, $v$, $p_H$, and $ p_L $ take values in $\R^n$, $u_L$ in $ \R^m$, $\mu_H$, and $\mu_L$ in $ \R$, and $ \bar \lambda $ is a nonnegative scalar, $ \bar p:=(p_H, p_L) $, $ \bar \mu:=(\mu_H,\mu_L) $, $\bar \sigma(y,x,p_L,\mu_L, \bar \lambda)$ either takes the value $0$ if $|x\hsmm -\hsmm y| \hsmm <\hsmm R_1$, or the value $\dis \sup_{u_0\in [0,1]} \hsm\{\langle p_L\hsm -\hsm\mu_L(x-y),\bar f(x,y)(x\hsmm -\hsmm y)u_0\rangle\hsmm-\hsmm \bar \lambda u_0^2\} $ if $|x-y| = R_1$ (in this case, $\bar \sigma(\cdot)$ takes an explicit form analogue to (\ref{sigma})), and the set valued mapping $ \partial^C_v \varphi(\cdot,\cdot)$ is given by
\vsm\vsm
\begin{eqnarray*}\partial^C_v \varphi(T^*,v^*)&:=&\hsm {\textrm{co }}\hsm\hsm\hsm\hsm \bigcup_{x^*\in\Psi (T^*,v^*)}\hsm \hsm\hsm\hsm {\Large \{}\zeta_2 \hsm\in\hsm L_2([0,T^*]\hsm :\R^{n}) : \exists\, (\bar p,\bar \mu, \bar \lambda) \hsm\in\hsm AC \hsm\times\hsm N\hspace{-.04cm}BV\hsm \times\hsm \R^+ \mbox{ s. t. } \|\bar p\|_{L_\infty}\hsmm +\hsmm \bar \lambda\hsmm +\hsmm \|\bar \mu\|_{TV} \hsmm\neq\hsmm 0, \vspace{-.4cm}\nonumber \\
	&& \hspace{1.5cm} \bar\mu \mbox{ monotonically non-increasing, with }\mu_L\mbox{ and }\mu_H \mbox{constant on, respectively,}\nonumber\\
    && \hspace{1.5cm} \{t\hsm\in\hsm[0,T^*]\hsm:\hsm |y-x|\hsm <\hsm R_1\} \mbox{ and }\{t\hsmm\in\hsmm[0,T^*]\hsm:\hsm |y-z|\hsmm >\hsmm R_1\, \forall z\hsmm\in \hsmm\partial Q\}, \nonumber\\
	&& \hspace{1.5cm} (-\dot{\bar p},\dot y^*,\dot x^*)\hsmm\in\hsmm\partial^C_{(y, x, \bar p)}H_L( y^*, x^*, v^*,u^*,\bar p,\bar \mu, \bar \lambda)\;\;\mbox{a.e.}, \;\; (\bar p(0),\bar p(T^*))\hsmm\in\hsmm\bar P_L,\;\;\mbox{and}\nonumber\\
	&& \hspace{1.5cm} -\bar \lambda \zeta_2 \in p_H\hsmm -\hsmm\mu_H(y^*\hsmm -\hsmm q_0)\hsmm +\hsmm \mu_L(x^*\hsmm -\hsmm y^*) \hsmm +\hsmm N_{\mathcal{V}}(v^*){ \Large\} }.\nonumber
\end{eqnarray*}

\vsm\vsm\vsm
\noindent Here,
\vspace{-1.2cm}
\begin{eqnarray*} \hspace{.9cm}
{\bar P}_L&\hsm :=\hsm &\{(\bar p(0),\bar p(T^*)) \hsm : p_H(0)\hsm \in\hsm \R^n\hsmm , \,p_L(0)\hsm\in\hsm N_{Q_1+y_0^*}(x^*(0))\hsm +\hsm \mu_L(0) (x^*(0)\hsm -\hsm y_0^*), \;p_H(T^*)\hsm\in\hsm - N_{\bar E}(y^*(T^*))\\
&& \hspace{.6cm} + \mu_H(T^*) (y^*(T^*)\hsmm -\hsmm q_0)\hsmm-\hsmm \mu_L (T^*) (x^*(T^*)\hsmm  -\hsmm y^*(T^*)),\, p_L(T^*) \hsmm =\hsmm  \mu_L(T^*) (x^*(T^*)\hsmm -\hsmm y^*(T^*))\}.
 \end{eqnarray*}
\end{theorem}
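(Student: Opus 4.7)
The proof follows the two-stage reduction announced in the abstract: flatten the bilevel structure via exact penalization enabled by partial calmness, regularise the sweeping dynamics by a smooth approximation so the resulting single-level problem admits the classical Gamkrelidze Maximum Principle, and then pass to the limit. The explicit characterization of $\partial_v^C\varphi$ in condition 6 is read off by applying the same Maximum Principle to the parametric lower-level problem $(P_L)$.

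\textbf{Flattening and smoothing.} Since $(T^*,y^*,x^*,u_L^*,v^*)$ minimizes $(P_F)$, the uniform weak sharp minimum property verified just after Definition~\ref{def:UWSM} supplies a partial calmness modulus $\rho\ge 0$, so the quintuple also minimizes, over all quadruples satisfying only the remaining dynamic, endpoint and state constraints of $(P_F)$, the penalized cost $\widetilde J := T + \rho\bigl(\int_0^T |u_L(s)|^2\, ds - \varphi(T,v)\bigr)$, with (\ref{eq:value function ineq}) removed. Exploiting the representation (\ref{alternate-Normalcone}), I then replace the piecewise-constant coefficient $\bar f(x,y)$ by a $C^2$ interpolant $\bar f_k(x,y)$ equal to $0$ for $|x-y|\le R_1-1/k$ and to $-M/R_1$ for $|x-y|\ge R_1$. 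This yields approximating single-level problems $(P_F^k)$ with $C^2$ dynamics and value functions $\varphi_k\to\varphi$ (Hausdorff-Lipschitz, by H1--H4 and the comments preceding Proposition~\ref{prop: existence of solution bilevel}); an Ekeland-type adjustment delivers near-minimizers $(T_k^*,y_k^*,x_k^*,u_{L,k}^*,v_k^*)\to(T^*,y^*,x^*,u_L^*,v^*)$ as $k\to\infty$.

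\textbf{Maximum Principle and limit.} Applying the Gamkrelidze Maximum Principle (cf.~\cite{arutyunov2011maximum}) to each $(P_F^k)$ produces multipliers $(q_H^k,q_L^k,\nu_H^k,\nu_L^k,\lambda^k,r^k)$, with $r^k=\lambda^k\rho$, satisfying adjoint equations, boundary/transversality conditions, the conservation law, and maximum conditions on both $u^k$ and $v^k$. After normalizing the nontriviality sum to $1$, uniform estimates, via Gronwall and the monotonicity built into Gamkrelidze multipliers, bound $(q_H^k,q_L^k)$ in $L_\infty$ and $(\nu_H^k,\nu_L^k)$ in $BV$, so one extracts a subsequence converging (weak-$*$ on the BV components, uniformly on the absolutely continuous ones, in $\R$ for the scalars) to $(q_H,q_L,\nu_H,\nu_L,\lambda,r)$ satisfying conditions 1--5 of the theorem. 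Constancy of $\nu_H$ and $\nu_L$ off the respective contact sets is inherited from the approximations, where Gamkrelidze multipliers attached to inequality state constraints already vanish in total variation off the contact set, and is preserved under weak-$*$ limits thanks to the uniform convergence of $(y_k^*,x_k^*)$ to $(y^*,x^*)$.

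\textbf{Characterization of $\partial_v^C\varphi$ and main obstacle.} The $v$-maximum condition for $(P_F^k)$ reads $-q_H^k+\nu_H^k(y_k^*-q_0)-\nu_L^k(x_k^*-y_k^*)\in -r^k\partial_v^C\varphi_k(T_k^*,v_k^*)+N_{\mathcal V}(v_k^*)$; passing to the limit and invoking a Clarke-type envelope/sensitivity result for value functions (applicable because $\varphi$ is locally Lipschitz on $\{(T,v):\Phi_L(T,y)\neq\emptyset\}$) identifies each element of $\partial_v^C\varphi(T^*,v^*)$ with an $L_2$-residual of the $v$-maximum condition for $(P_L)$ along some minimizer $\bar x\in\Psi(T^*,v^*)$, the accompanying adjoint data $(\bar p,\bar\mu,\bar\lambda)$ being produced by the Gamkrelidze Maximum Principle applied to $(P_L)$ with Hamilton-Pontryagin function $H_L$ and endpoint set $\bar P_L$. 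Closing the convex hull over $\bar x\in\Psi(T^*,v^*)$ yields exactly the set-valued expression displayed in the theorem. The principal difficulty is securing nontriviality of the limit multipliers while simultaneously handling the piecewise-defined $\sigma$-term on the contact set $\{|x^*-y^*|=R_1\}$: hypotheses H4--H5 rule out degenerate controllability so nontriviality survives normalization, and the monotonicity together with the localized support of $\nu_L^k$, combined with Helly's selection theorem, identify the correct branch of (\ref{sigma}) in the limit adjoint equations (\ref{adjoint_H})--(\ref{adjoint_L}).
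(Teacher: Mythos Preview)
Your proposal follows the same overall architecture as the paper: penalize via partial calmness, replace the sweeping term by a Lipschitz (in your case $C^2$) approximation, invoke Ekeland to obtain approximate minimizers, apply the Gamkrelidze Maximum Principle, and pass to the limit, with $\partial_v^C\varphi$ characterized via optimality conditions for the parametric lower problem. Three points where the paper's execution differs from yours deserve comment.

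First, the paper reverses your order: it smooths the lower level first (Subsection~\ref{subsec:approx lower level}, using $c(\gamma_k,x,y)=\min\{M/R_1,\gamma_k e^{\gamma_k h_L(x,y)}\}$ rather than a $C^2$ interpolant, which is why it needs the nonsmooth Gamkrelidze principle of~\cite{KLP-paperIEEE-LCSS2020}), flattens to $(P_F^k)$, applies Ekeland (Proposition~\ref{propo: ekeland principle}), and only then transfers partial calmness from $(P_F)$ to the approximate problem $(\widetilde P_F^k)$ via Proposition~\ref{Prop-part-cal} before penalizing. Your penalize-then-smooth order can be made to work, but you still owe the verification that the reference process remains an $\varepsilon_k$-minimizer after simultaneously replacing the dynamics and $\varphi$ by $\varphi_k$.

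Second, and more substantively, you omit the time reparametrization $\dot t=\omega$ on the fixed interval $[0,T^*]$ that the paper introduces in Subsection~\ref{subsec:approx lower level}(b). This device converts the free horizon into a control, allows $\partial^C_{(\omega,v)}\varphi^k$ to be computed in Proposition~\ref{subgrad-value}, and in the limit produces the constant $c$ in the conservation law~\eqref{time-transversality} through the $\omega$-component of the maximum condition~\eqref{max-vk+wk}: one finds $\overline H_L^{k*}\equiv\bar c_k$ by autonomy of $(P_L^k)$, so $\partial_\omega^C\varphi^k=\{c_k\}$, and the relation $H_H^*=\lambda+rc$ emerges after taking limits (Subsection~\ref{subsec:limit taking}). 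Without this mechanism your account of where the $rc$ term in condition~4 comes from is incomplete.

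Third, the paper does not invoke an abstract envelope/sensitivity theorem for $\partial_v^C\varphi^k$; instead it builds an auxiliary problem $(P_{aux})$ directly from the proximal-subdifferential inequality for $\varphi^k$ and reads the characterization off the Gamkrelidze optimality conditions for $(P_{aux})$, then passes from proximal to limiting to Clarke subdifferentials by compactness. Your description is correct in spirit but elides this construction.
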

The proof of Theorem \ref{theorem:main thm}, including the computation of $\partial^C_v \varphi(T^*,v^*)$, follows in the next section.
\section{Proof of the Main Theorem}\label{sec:proof}
The proof is organized into five steps:
\begin{itemize}
\item[Step 1.] Here, the key challenge of the sweeping process at the lower level problem is addressed. Indeed, the main difficulty is the lack of Lipschitz continuity of the right-hand side of the dynamics of $(P_L)$ with respect to the state variable. In order to overcome this difficulty, we construct a sequence of auxiliary problems $\{(P_L^k)\}$ approximating $(P_L)$, being the truncated normal cone in the dynamics of each $(P_L^k)$ replaced by a mapping depending on the parameter $k$ and Lipschitz continuous w.r.t. the state variable. Note that our construction differs from the one in \cite{de2019optimal,zeidan2020sweeping} where the usual normal cone is considered. Moreover, in opposition to  work in \cite[Lemma 1]{de2019optimal}, or \cite[Lemma 4.2]{zeidan2020sweeping}, the state constraint cannot be discarded from the formulation of our approximation to the lower level problem.

In this step, we also provide an explicit expression for $\partial^C_{v} \varphi^k$, where $\varphi^k$ is the value function for $(P_L^k)$.
\item[Step 2.] Now, two substeps are combined: (a) Flattening of the approximating bilevel problem, and (b) Application of the Ekeland's
variational principle. Substep (a) allows to obtain a standard ``single-level'' approximating optimal control problem. The flattening consists in replacing the original pair of coupled optimal control problems $(P_H)$, and $(P_L^k)$ by another one constructed by adding to $(P_H)$ the constraints of $(P_L^k)$, and an additional constraint involving the value function of $(P_L^k)$ that depends on $(T,y)$. Since the solution to each $(P_L^k)$ is not known, Substep (b) is required. It consists in the application of Ekeland's variational principle to ensure the existence of a solution to a suitably perturbed version of the approximating flattened problem $(P_F^k)$. By a suitably perturbed version, we mean that the solutions to the approximating sequence of problems converge in a proper sense (to be defined below) to the one of the original optimal control problem.
\item[Step 3.] Here, the degeneracy caused by the constraint (\ref{eq:value function ineq}) involving the value function associated with the lower level problem as detailed in Step 2 (a) is handled by using the partial calmness property which holds from the fact that the solution is a local uniform weak sharp minimum. Due to the partial calmness property, an exact penalization technique allows the construction of a related optimal control problem where this additional constraint is absorbed in the cost functional as an exact penalty term.
\item[Step 4.] Now, the maximum principle of Pontryagin in the Gamkrelidze's form of \cite{arutyunov2011maximum} with nonsmooth data, \cite{KLP-paperIEEE-LCSS2020}, is applied to the perturbed approximating single-level problem. We observe that, since the velocity set of the approximating problem includes the one of the original problem, no incompatibility between state and endpoint constraints emerges. Besides the reasons pointed out in Section \ref{sec: introduction} for choosing the Gamkrelidze's form, there are significant technical advantages due the fact that, in this form, the state constraints multipliers are monotonic functions of bounded variation instead of mere Borel measures as it is the case of the Dubovitskii-Miliutyn form.
\item[Step 5.] Here, we pass to the limit in the necessary conditions of optimality established in Step 4. By using standard compactness
results, the necessary optimality conditions of our main result are recovered.
\end{itemize}
As we go through these five steps, most of the relevant intermediate results will be only stated in order to simplify the presentation, being the corresponding proofs presented in the Appendix.

\vspace{-.6cm}
\subsection{Lower Level Dynamics Approximation, and Computation of $\partial^C \varphi^k$}\label{subsec:approx lower level}

\noindent a) Lipschitz Continuous Approximation to the Lower Level Dynamics

Here, a sequence of conventional - in the sense of dynamics being Lipschitz continuous on the state variable - control processes approximating a feasible sweeping control process to $(P_L)$. Approximations of this type have been considered in \cite{de2019optimal,zeidan2020sweeping}.
However, since, we have a bounded truncated normal cone instead of the usual normal cone, and a time varying set $Q_1+y(t)$ instead of a constant one, a construction scheme significantly different of the previous ones is required.

By recalling the representation of the lower level dynamics, $\dis \dot x = f(x,u) + \bar f(x,y)u_0(x-y) $, where $u_0\hsmm\in\hsmm {\cal U}_0\hsmm :=\hsmm L^1([0,T];[0,1]) $ is a scalar control, and $\bar f:\R^{2n}\to\R$ is defined in (\ref{alternate-Normalcone}), we now consider the approximate lower level control system $(D^k(T,y))$, referred to by $(D^k)$,
$$
(D^k)\qquad \left\{ \begin{array}{l} \dot x =f^{k}(x,y,u,u_0)\;\; [0,T]-\mbox{a.e.}\quad x(0)\in Q_1+y_0,\\
h_L(x,y) \le 0,  \;\; \forall t\in [0,T], \quad u\in \mathcal{U}, \qquad u_0\in \mathcal{U}_0,
\end{array}\right. \qquad\mbox{ where}
$$
\vsm\vsm\vsm
\begin{equation} \label{approx of normal cone}f^k(x,y,u,u_0):=f(x,u)- u_0 c ( \gamma_k,x,y)(x-y),\end{equation} being $\dis c(\gamma_k,x,y) := \min \left\{\hspace{-.07cm}\frac{M}{R_1}, \gamma_k e^{\gamma_k h_L(x,y)}\hspace{-.07cm}\right\}$, $h_L(x,y)$ as in (\ref{eq:expression of h_L}), and  $\{\gamma_k\}$ is a sequence such that
\begin{equation}\label{eq:condition on gamma_k}
\lim\limits_{k\to\infty}\gamma_k =\infty, \mbox{ and, for all } k, \;\; \gamma_k>\frac{M}{R_1}.
\end{equation}
Consider the $k$-approximate lower level problem defined by
$$(P_L^k)\mbox{ Minimize }  J_L(x(0),u_L; T,y):=\int_{0}^{T} \hsm |u_L(t)|^2 dt  \mbox{ subject to } (D^k).$$
Define $ {\cal F}_L^k (T,y)\hsmm :=\hsmm \{(x,u,u_0)\hsmm \in\hsmm AC([0,T];\hsmm\R^n)\hsmm\times\hsmm{\cal U}\hsmm\times\hsmm \mathcal{U}_0\hsmm : (x,u,u_0) \mbox{ feasible for } (D^k)  \} $ where $(T,y,x,u)$ is feasible for $(P_F)$.
\begin{proposition}\label{proposition: nonempty feasible set}
For $\gamma_k$ satisfying (\ref{eq:condition on gamma_k}), and $(T,y)\hsmm \in\hsmm \R\hsm \times \hsm \R^n$ such that
 ${\cal F}_L (T,y)\neq \emptyset $, ${\cal F}_L^k (T,y)\neq\emptyset$.
\end{proposition}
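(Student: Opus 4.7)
The plan is to establish the stronger inclusion ${\cal F}_L(T,y) \subseteq {\cal F}_L^k(T,y)$, from which non-emptiness of ${\cal F}_L^k(T,y)$ is immediate. In other words, I would show that every admissible triple for the original sweeping process is already admissible for the Lipschitz regularization $(D^k)$, as soon as $\gamma_k > M/R_1$.

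First I pick any $(\bar x, \bar u, \bar u_0) \in {\cal F}_L(T,y)$, which exists by assumption, and propose $(x^k, u^k, u_0^k) := (\bar x, \bar u, \bar u_0)$ as a candidate solution triple for $(D^k)$. I would verify that $\bar x$ solves the approximating ODE driven by $(\bar u, \bar u_0)$ by splitting $[0,T]$ into two regions. On the set where $\bar x(t) \in \partial(Q_1 + y(t))$, one has $h_L(\bar x, y) = 0$, and combining this with condition (\ref{eq:condition on gamma_k}) yields $c(\gamma_k, \bar x, y) = \min\{M/R_1,\, \gamma_k e^{0}\} = M/R_1$; accordingly, the regularized correction $-\bar u_0\, c(\gamma_k, \bar x, y)(\bar x - y)$ coincides pointwise with the truncated normal-cone element $\bar f(\bar x, y)(\bar x - y)\bar u_0 = -(M/R_1)(\bar x - y)\bar u_0$ prescribed by (\ref{alternate-Normalcone}). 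On the complementary set, where $\bar x(t)$ lies in the interior of $Q_1 + y(t)$, the structural requirement imposed on $u_0$ in the formulation of $(P_L(T,y))$ forces $\bar u_0(t) = 0$; both corrections vanish and each dynamics collapses to $\dot x = f(x, u)$. Consequently the right-hand side of $(D^k)$ evaluated at $(\bar x, y, \bar u, \bar u_0)$ coincides with $\dot{\bar x}$ almost everywhere on $[0, T]$.

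Because $c(\gamma_k, \cdot, \cdot)$ is the minimum of the constant $M/R_1$ and the smooth function $\gamma_k e^{\gamma_k h_L}$, it is Lipschitz continuous in $x$, and hence $f^k(\cdot, y, u, u_0)$ is Lipschitz in $x$ uniformly on admissible $(u, u_0)$. By classical Carathéodory theory, the ODE in $(D^k)$ has a unique absolutely continuous solution for each prescribed initial datum and each measurable control pair, so the previous verification establishes $x^k \equiv \bar x$. Feasibility is then automatic: $h_L(x^k, y) = h_L(\bar x, y) \le 0$ is inherited from the original admissibility, and the control constraints $\bar u \in {\cal U}$ and $\bar u_0 \in {\cal U}_0$ are unchanged. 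This yields $(\bar x, \bar u, \bar u_0) \in {\cal F}_L^k(T, y)$ and the claim follows.

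I do not anticipate a genuine obstacle: the whole argument rests on the compatibility between the cap $M/R_1$ in the definition of $c(\gamma_k, \cdot, \cdot)$, which is saturated precisely at the boundary thanks to $\gamma_k > M/R_1$, and the structural property that $\bar u_0$ vanishes off the boundary, which silences the only region in which the regularized correction could differ from the original sweeping term. The only routine point to check is the Lipschitz regularity of $f^k$ in $x$ needed to invoke Carathéodory uniqueness, which follows directly from the explicit form of $c(\gamma_k, \cdot, \cdot)$.
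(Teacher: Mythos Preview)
Your proposal is correct and follows essentially the same approach as the paper: both rely on the fact that, because $\gamma_k > M/R_1$, the velocity set of $(D^k)$ contains that of $(P_L)$, so any triple feasible for the original sweeping system remains feasible for the regularized one. Your version simply makes this velocity-set containment explicit via the boundary/interior case split and the structural property $\bar u_0=0$ off the boundary, whereas the paper states the containment in one line; the appeal to Carath\'eodory uniqueness is harmless but unnecessary, since you have already verified directly that $\bar x$ solves the $(D^k)$ dynamics.
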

\begin{proof}. Take a pair $(T,y)\in\R\hsmm\times\hsmm \R^n$ such that ${\cal F}_L (T,y)\neq \emptyset$. Standard results for ordinary differential equations with dynamics Lipschitz continuous with respect to the state variable $x$ asserts the existence of a solution for any given feasible control pair $(u,u_0)$. Moreover, due to the fact that $\dis \gamma_k>\frac{M}{R_1}$, and, by construction, we have   that, for any given $(T,y, x)$, the velocity set associated with the dynamics of $(D^k)$ contains the one of the dynamics of $(P_L)$\footnote{More generally, for any given $x\hsmm\in\hsmm\R^n$, if $\dis \tilde k> \bar k> \frac{M}{r_1}$, the velocity set of $(D^{\bar k})$ is contained in the one of $(D^{\tilde k})$}. Thus, the existence of a control process $(x_k,u_k, u_{0,k})$ feasible to $(D^k)$ satisfying the constraints of $(P_L^k)$ is guaranteed.
\end{proof}
Take $\overline T > 0$, and $\Delta(\cdot.\cdot)\hsmm :\hsmm (\mathcal{V}\hsmm\times\hsmm\mathcal{U} \hsmm\times\hsmm\mathcal{U}_0)\times (\mathcal{V}\hsmm\times\hsmm\mathcal{U}\hsmm\times\hsmm\mathcal{U}_0)\to [0,\infty)$, with $\mathcal{V}, \mathcal{U}$, and $\mathcal{U}_0$ defined on $[0,\overline T]$, given by
\begin{equation}\label{expression of Ekeland metric}
	\Delta(\bar\theta_1,\bar\theta_2):=\| \bar\theta_1-\bar\theta_2\|_{L_1},
\end{equation}
and let ${\cal S}:= \R^+ \hsmm\times\hsmm AC([0,\overline T];\R^n)\hsmm\times\hsmm AC([0,\overline T];\R^n)\hsmm \times\hsmm{\cal V}  \hsmm \times \hsmm {\cal U} \hsmm\times \hsmm {\cal U}_0$. In what follows, let $\dis \overline T =\max_{i\in \N}\{ T, T_i\} $, and, put $\bar\theta_i=0$ if $\bar\theta_i $ is not specified in some subinterval $[T_i,\overline T]$.

From the proposed approximation scheme, and due to Proposition \ref{proposition: nonempty feasible set}, the following result holds.
\begin{proposition}\label{prop: uniqueness solution} Let $(T,y,x,v,u,u_0)\hsmm\in\hsmm {\cal S}$ be such that $(x,u,u_0)\hsmm\in\hsmm {\mathcal F}_L(T,y)$. Then, there exists a sequence $\left\{(T_k,y_k, x_k,v_k, u_k,u_{0,k})\right\}_{k=1}^\infty$ with elements in ${\cal S}^k$, being ${\cal S}^k$ the space ${\cal S}$ defined on $[0,T_k]$, satisfying:
\begin{itemize}
\vspace{-.1cm}
\item[1)]  $|T\hsmm -\hsmm T_k| +|x_k(0)\hsmm -\hsmm x(0)|+\Delta((v_k,u_k,u_{0,k}),(v,u,u_0)) \to 0 $ as $k\to \infty$,
\item[2)]  $y_k(0)=y_0$, and $\dot y_k = v_k$ $ {\cal L}$-a.e. on $[0,T_k]$, and
\item[3)]  $(x_k,u_k,u_{0,k})\in {\mathcal F}_L^k(T_k,y_k)$, with $(D^k)$ defined on $[0,T_k]$.
\end{itemize}
\vspace{-.1cm}
Moreover, there exists a subsequence (we do not relabel) of $\{x_k\}$ converging uniformly to an arc $x$ which is the unique solution to
\vspace{-.4cm}$$ (D)\quad\dot x \in f(x,u)- N_{Q_1+y}^M(x) \;\; [0,T]-\mbox{a.e.}, \;\; x(0) \in Q_1+y_0, \quad u\in {\cal U}, \quad h_L(x,y) \le 0\;\; \forall \; t \in [0,T].
$$
\end{proposition}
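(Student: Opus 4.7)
The plan is to set $T_k = T$, $y_k = y$, $v_k = v$, $u_k = u$, and $x_k(0) = x(0)$ and to define $x_k$ as the unique absolutely continuous solution of the Cauchy problem $\dot x_k = f^k(x_k,y,u,u_{0,k})$, $x_k(0) = x(0)$, for an appropriate $u_{0,k}\in\mathcal{U}_0$ with $u_{0,k} \to u_0$ in $L_1$. Well-posedness of this ODE is immediate from Picard--Lindel\"of since $f(\cdot,u)$ is Lipschitz by H1 and $c(\gamma_k,\cdot,y) = \min\{M/R_1,\gamma_k e^{\gamma_k h_L(\cdot,y)}\}$ is Lipschitz in $x$ (with constant depending on $k$, which is harmless for each fixed $k$). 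Conditions~1--2 of the statement then hold as soon as $u_{0,k} \to u_0$ in $L_1$ is established.

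For feasibility $(x_k, u_k, u_{0,k}) \in \mathcal{F}_L^k(T, y)$, one needs $h_L(x_k, y) \le 0$ throughout $[0,T]$. The structure of the exponential barrier is designed precisely to enforce this: whenever $h_L(x_k, y)$ approaches zero from below, the penalty term $-u_{0,k}c(\gamma_k, x_k, y)(x_k - y)$ supplies a strong inward restoring vector field pointing from $\partial(Q_1+y)$ toward $y$, whose magnitude grows in $\gamma_k$ before saturating at $M$. Choosing $u_{0,k}$ adaptively, equal to $u_0$ well inside $Q_1 + y$ and raised to $1$ in a thin boundary layer $\{s : h_L(x_k, y) \ge -\epsilon_k\}$ with $\epsilon_k \to 0$, a Gronwall comparison yields $h_L(x_k, y) \le 0$ for $k$ large, while $\|u_{0,k} - u_0\|_{L_1}$ vanishes with the boundary layer. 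Feasibility gives $|x_k - y| \le R_1$; combined with $|f| \le M_1$ and the truncation $|c(\gamma_k,x_k,y)(x_k - y)| \le M$, this makes $\{x_k\}$ equi-Lipschitz, so Arzel\`a--Ascoli delivers a subsequence converging uniformly to some $\bar x \in AC([0,T];\R^n)$.

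The next step is to pass to the limit in the integral form of $x_k$. The drift $\int_0^t f(x_k,u)\,ds$ converges to $\int_0^t f(\bar x,u)\,ds$ by uniform convergence and Lipschitzness of $f$. The penalty integrand $\zeta_k(s) := u_{0,k}(s)\,c(\gamma_k,x_k(s),y(s))\,(x_k(s) - y(s))$ is bounded in $L_\infty([0,T];\R^n)$ by $M$, so by Banach--Alaoglu, along a further subsequence it converges weakly-$\ast$ in $L_\infty$ to some $\zeta$ with $|\zeta| \le M$ a.e. On $\{s: h_L(\bar x(s),y(s)) < 0\}$, uniform convergence forces $\gamma_k e^{\gamma_k h_L(x_k,y)} \to 0$, hence $\zeta = 0$; on $\{s: h_L(\bar x(s),y(s)) = 0\}$, the vector $x_k - y$ aligns asymptotically with the outward normal to $Q_1 + y$ at $\bar x$, so $\zeta(s) \in N_{Q_1+y(s)}(\bar x(s))$, and the bound $|\zeta| \le M$ upgrades this to $\zeta(s) \in N^M_{Q_1+y(s)}(\bar x(s))$. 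Hence $\bar x$ satisfies $(D)$.

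Finally, I would invoke uniqueness of solutions to $(D)$ to conclude $\bar x = x$: for two solutions $x^1, x^2$ of $(D)$ with $x^1(0) = x^2(0)$, the monotonicity of the normal cone to the convex set $Q_1 + y(t)$ gives $\langle \xi^1 - \xi^2, x^1 - x^2\rangle \ge 0$ for any $\xi^i \in N^M_{Q_1+y}(x^i)$, so $\tfrac{d}{dt}|x^1 - x^2|^2 \le 2K_f|x^1 - x^2|^2$ and Gronwall forces $x^1 \equiv x^2$. Thus $\bar x = x$ and the full subsequence converges to $x$. The main technical obstacle is the identification step: certifying that the weak-$\ast$ limit $\zeta$ lies in the \emph{truncated} cone $N^M$ rather than the full $N_{Q_1+y}$. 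This hinges decisively on the $L_\infty$ bound $|\zeta_k| \le M$ supplied by the truncation $c \le M/R_1$, which must survive the weak-$\ast$ limit and combine with the directional alignment argument on the boundary to place $\zeta$ inside $N^M$.
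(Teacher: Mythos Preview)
Your overall strategy is sound, but you take a harder road than necessary on the existence part, and this is where your argument is most exposed.

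\textbf{Existence of the approximating sequence.} You build $x_k$ by solving the ODE with an adaptively modified control $u_{0,k}$ (raised to $1$ in a thin boundary layer) and then invoke a Gronwall comparison to force $h_L(x_k,y)\le 0$. That comparison is delicate: at the boundary the penalty saturates at $M$, so whether the restoring term actually dominates $\langle x_k-y,\,f(x_k,u)-v\rangle$ depends on the quantitative relation between $M$, $M_1$ and $V$ encoded in H5, and you do not spell this out. The paper avoids this entirely by observing that the velocity set of $(D^k)$ \emph{contains} that of $(P_L)$: since $u_0$ vanishes off the boundary (by construction of the original sweeping control) and, on the boundary, $c(\gamma_k,x,y)=M/R_1$ because $\gamma_k>M/R_1$, the original triple $(x,u,u_0)$ itself satisfies $(D^k)$ for every $k$. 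So one may simply take the constant sequence $(T_k,y_k,x_k,v_k,u_k,u_{0,k})=(T,y,x,v,u,u_0)$, and items 1)--3) are trivial. Your adaptive construction is not wrong, but it creates work and a potential gap where none is needed.

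\textbf{Passage to the limit.} Here your route genuinely differs from the paper's. You extract a weak-$\ast$ limit $\zeta$ of the penalty term in $L_\infty$, split $[0,T]$ into interior and boundary times, and identify $\zeta\in N^M_{Q_1+y}(\bar x)$ directly. The paper instead proves the variational inequality $\langle f(x,u)-\dot x,\,\tilde x-x\rangle\le 0$ for all $\tilde x\in Q_1+y$ using Mazur's theorem on convex combinations of $\dot x_k$, and recovers the truncation from the uniform bound on the penalty. Both arguments are standard for sweeping processes; yours is slightly more direct about the $M$-truncation, the paper's handles the weak $L_1$ convergence a bit more carefully via Mazur.

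\textbf{Uniqueness.} Your monotonicity argument is correct for the convex set $Q_1+y$ and coincides in substance with the paper's (which phrases it via hypomonotonicity and Gronwall).
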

\vspace{-.3cm}Since the proof of Proposition \ref{prop: uniqueness solution} is long and involves standard arguments, we include it in the Appendix.
\vspace{.05cm}

\noindent b) Computation of $\partial^C \varphi^k$

Denote by $\varphi^k$ the value function for $(P_L^k)$ in (\ref{value function-upper}). Since it is needed to establish the necessary conditions of optimality, we shall compute its limiting subdifferential, denoted by $\partial_v^C \varphi^k$ (whose limit, $\partial_v^C \varphi$, appears in the maximum condition on the upper level problem in Condition 6. of Theorem \ref{theorem:main thm}). For this purpose, we apply the idea in \cite[Theorem 2.3]{ye1997optimal}, albeit in the context of the Hamilton-Pontryagin function in the Gamkrelidze's form given by (\ref{hamilton pontryagin function}) (cf. \cite{KLP-paperIEEE-LCSS2020}). Following the idea in \cite{ye1997optimal}, we notice that the arguments of the value function inherited from the upper level problem are given in terms of control functions.
For this reason, it is convenient to cast our problem in an equivalent one on a fixed time interval $[0,T^*]$ by a standard time variable change, giving rise to an additional scalar nonnegative control and state component, i.e., $ t\hsmm\in\hsmm AC([0,T^*];\R^+)$, with $t(0)= 0$, and $ \omega\hsmm\in\hsmm L_2([0,T^*];\R^+)$, related by
\begin{equation}\label{omega definition} t(\tau)=\int_0^\tau \omega (s)ds . \end{equation}
This allows the value function in (\ref{value function-upper}) to be expressed as in the context of \cite{ye1997optimal}, i.e., the value function of $(P_L^k)$ on the functional parameters of $(P_H)$, $\varphi^k:L_2([0,T];\R^+)\hsmm\times\hsmm L_2([0,T];\R^n) \to \R$, where
\begin{equation}\varphi^k(\omega,v)=\min \{ J_L(x(0),u_L;T,y)\hsm : (x(0),u_L) \mbox{ feasible for } (P_L^k(T,y))\}, \label{expression of value function}\end{equation}
being $v\hsmm\in\hsmm {\mathcal V}$ the control associated with $y$, and $\omega$ the new control yielding the final time $T$, on $[0,T^*]$.
Now, problem $(P_L^k)$, i.e., $(P_L^k(t(T^*),y))$, can be written (without relabeling) as follows
\begin{eqnarray}
(P_L^k) \nonumber &&\mbox{ Minimize}\; J_L(x(0),u_L;t(T^*),y) := \int_{0}^{T^*} \hsm\hsm |u_L(s)|^2 \omega(s)ds, \nonumber\\
&& \nonumber\mbox{subject to }(\overline D^k) \left\{\begin{array}{ll} \dot x= f^{k}(x,y,u,u_0)\omega, \quad \dot t = \omega,  \quad [0,T^*]-\mbox{a.e.},\\
x(0) \in Q_1\hsmm +\hsmm y_0,  \quad t(0)=0.\\
u\in \mathcal{U},  \quad u_0  \in \mathcal{U}_0,  \quad \omega \in L_2([0,T^*];\R^+). \label{feasible upper level approx} \\	
h_L(x,y) \leq 0 \quad \forall t\in [0,T^*], \end{array}\right.
\end{eqnarray}
where $f^{k}(x,y,u,u_0)$ is as in (\ref{approx of normal cone}), and $(t,y)$ (or, equivalently, the associated control pair $(\omega,v)$) is a solution to
$(P^k_H(x(0),u_L))$ on the fixed time interval $[0,T^*]$, now cast as
\begin{eqnarray*}(P^k_H) \mbox{ Minimize }&&  J_H(t(T^*),y,x(0),u_L):= t(T^*). \nonumber\\
	\mbox{subject to }&& \dot y= v\omega ,  \;\; \dot t=\omega  \;\;\; [0,T^*]-\mbox{a.e.}, \quad y(0)= y_0 \in Q ,  \;\;y(T^*)\in \bar E,  \;\; t(0)=0,\nonumber\\
	&& v \in \mathcal{V},  \;\; \omega \in L_2([0,T^*];\R^+),  \;\;  h_H(y) \le 0  \;\; \forall t \in [0,T^*], \mbox{ and}\\
	&&(t(T^*),y) \mbox{ is s.t. }\exists\mbox{ a solution } (x(0),u_L) \mbox{ to } P_L^k(t(T^*),y). \nonumber
\end{eqnarray*}

\vspace{-.6cm}
The new time parametrization enables to express the functional interdependence between $(P^k_H)$ and $(P_L^k)$ in the variables $(t(T^*),y)$, through the value function $\varphi^k$ as a function of the controls $(\omega,v)$.

Let $\bar p= ( p_H, p_L)$, $\bar \mu = ( \mu_H, \mu_L) $, $(\dot y, \dot x) = (v, f(x,u)\hsmm -\hsmm u_0 c(\gamma_k, y, x)(x\hsmm -\hsmm y))\omega$, and $\omega$ as in (\ref{omega definition}). Since the context is clear, we omit the index $k$ in the specification of the multipliers, the state, and control variables in order to unburden the notation. The Hamilton-Pontryagin function in the Gamkrelidze's form for $(P_L^k)$, is expressed by $H_L^k (\cdot,\omega)\hsm = \hsm \overline H_L^k(\cdot)\omega$, where
\vspace{-.3cm}
\begin{eqnarray} \overline H_L^k(y, x, v,u,\bar p,\bar \mu,\bar  \lambda) &  := & \langle p_H\hsmm -\hsmm\mu_H (y\hsmm -\hsmm  q_0), v\rangle +\mu_L\langle x\hsmm -\hsmm y,v \rangle-\bar \lambda |u|^2 \nonumber\\ && \label{hamilton pontryagin function} \hspace{2cm}+\langle p_L\hsmm - \hsmm \mu_L(x- y), f(x, u)\rangle\hsmm + \hsmm \bar \sigma^k(y,x, p_L, \mu_L,\bar\lambda).
\end{eqnarray}
\vspace{-.34cm}
Here, $\bar \lambda \ge 0$,  and $\bar \sigma^k(y,x, p_L, \mu_L,\bar\lambda) = 0 $ if $|x-y|< R_1$ and, for $|x-y| = R_1$, given by
$$\dis  \left\{\begin{array}{ll} 0 & \mbox{  if }0 \geq \tilde\sigma^k(p_L,\mu_L, x,y,R_1) \\
\dis\frac{1}{4\bar \lambda}\left(c(\gamma_k,x,y)\tilde\sigma^k(p_L,\mu_L, x,y,R_1)\right)^2 & \mbox{  if } 0 \leq \tilde\sigma^k(p_L,\mu_L, x,y,R_1) \leq \frac{2\bar\lambda}{c(\gamma_k,x,y)} \\
\dis c(\gamma_k,x,y)\tilde \sigma^k(p_L,\mu_L, x,y,R_1)-\bar \lambda & \mbox{  if }\tilde\sigma^k(p_L,\mu_L, x,y,R_1)\geq \frac{2\bar\lambda}{c(\gamma_k,x,y)},\end{array}\right.$$
where $\tilde\sigma^k(p_L,\mu_L, x,y,R_1):= \mu_L R_1^2-\langle p_L,x-y\rangle$.
\vspace{-.1cm}
\begin{proposition}\label{subgrad-value}
Let $\Psi^k(\omega,v)$ be the set of optimal solutions to $(P_L^k(T,y))$, where $\dis T\hsmm =\hsm \int_0^{T^*}\hsm\hsm\omega(s)ds $, and let $(x,u)$ be one of such optimal control processes. Assume that H1-H6 are in force. Then, $ \varphi^k$ is Lipschitz continuous in its domain, and
\vspace{-.4cm}
\begin{eqnarray}\partial^C \varphi^k(\omega,v)&\hsm:=&\hsm\mbox{co}\hsm\hsm\hsm\hsmm \bigcup_{x\in\Psi^k (\omega,v)} \hsm\hsm\hsm\hsm\hsmm \{(\zeta_1,\zeta_2) \hsm\in\hsm L_2([0,T^*]\hsm :\R^{n+1})\hsm : \hsmm \exists\, (\bar p,\bar \mu, \bar \lambda) \hsm\in\hsm AC \hsm\times\hsm N\hspace{-.03cm}BV\hsm \times\hsm \R^+ \mbox{ s.t. } \|\bar p\|_{L_\infty}\hsmm +\hsmm \bar \lambda\hsmm +\hsmm \|\bar \mu\|_{TV}\hsmm \neq\hsmm 0,\vspace{-.4cm}\nonumber \\
&& \hspace{.1cm} \bar\mu \mbox{ is non-increasing, with }\mu_L\hsmm \mbox{ and } \mu_H \mbox{constant, respectively, on}\nonumber\\
&& \hspace{.1cm} \{t\hsmm\in\hsmm[0,T^*]\hsm:\hsmm |y-x|\hsm <\hsm R_1\},\hsmm\mbox{ and }\{t\hsmm\in\hsmm[0,T^*]\hsm:\hsmm |y-z|\hsmm >\hsmm R_1\, \forall z\hsm\in \hsm\partial Q\}, \nonumber\\
&& \hspace{.1cm} (-\dot{\bar p},\dot y,\dot x)\hsmm\in\hsmm\partial^C_{(y, x, \bar p)}H_L^k( y, x, v,u,\bar p,\bar \mu, \bar \lambda,\omega)\;\;[0,T^*]\hsmm -\hsmm \mbox{a.e.},\quad (\bar p(0),\bar p(T^*)), \in \bar P_L^k\label{grad-aval-k}\\
&& \hspace{.1cm} -\bar \lambda( \zeta_1, \zeta_2) \in \{\overline H_L^k(y, x, v,u,\bar p,\bar \mu,\bar  \lambda)\} \hsmm\times\hsmm (p_H-\mu_H(y- q_0)+ \mu_L(x- y))\omega + N_{\mathcal{V}}(v)\}. \nonumber
\end{eqnarray}
Here,
 \vspace{-1.2cm}\begin{eqnarray*}\hspace{.7cm} \bar P_L^k &\hsmm =&\hsmm \{(\bar p(0),\bar p(T^*))\hsm : \hsmm p_H(0) \hsm\in\hsm\R^n,\,p_L(0)\hsmm\in\hsmm N_{Q_1}(x(0) \hsmm- \hsmm y_0)\hsm +\hsm \mu_L(0) (x(0)\hsmm -\hsmm y_0), \;p_H(T^*)\hsm\in\hsm - N_{\bar E}(y(T^*)\\
&& \hspace{1.6cm}+ \mu_H(T^*) (y(T^*)\hsmm -\hsmm  q_0)\hsmm -\hsmm  \mu_L (T^*) (x(T^*)\hsmm -\hsmm y(T^*),\,p_L(T^*) \hsmm= \hsmm \mu_L(T^*) (x(T^*)\hsmm -\hsmm y(T^*))\}.
\end{eqnarray*}
\end{proposition}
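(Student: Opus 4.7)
\textbf{Proof plan for Proposition \ref{subgrad-value}.} The plan is to follow the strategy of \cite[Theorem 2.3]{ye1997optimal}, adapted to the Gamkrelidze form of the Maximum Principle with nonsmooth data as developed in \cite{KLP-paperIEEE-LCSS2020}. The essential point is that, after the time reparametrization (\ref{omega definition}), problem $(P_L^k)$ is a conventional state-constrained optimal control problem on the fixed interval $[0,T^*]$, parametrized by the controls $(\omega,v)\in L_2([0,T^*];\R^+)\times\mathcal{V}$, with dynamics that are now Lipschitz continuous in $x$ (by construction of $f^k$ in (\ref{approx of normal cone}) together with the smooth truncation embedded in $c(\gamma_k,x,y)$).

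First I would establish the Lipschitz continuity of $\varphi^k$. Under H1--H6, the feasible multifunction $(\omega,v)\mapsto\mathcal{F}_L^k(t(T^*),y)$ is Hausdorff Lipschitz in the $L_1$-topology, a consequence of H4, Proposition~\ref{proposition: nonempty feasible set}, and a Filippov-type selection argument applied to $(\overline D^k)$; combining this with the quadratic-in-$u_L$ running cost and the compactness of $U$ and $V$ yields a local Lipschitz bound on $\varphi^k$. This in turn guarantees that the Clarke subdifferential $\partial^C\varphi^k$ is nonempty at every point of the domain and equals the convex hull of the limiting subdifferential.

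Next, I would apply the maximum principle in Gamkrelidze's form (\cite{arutyunov2011maximum,KLP-paperIEEE-LCSS2020}) to $(P_L^k)$ at an arbitrary optimal process $(x,u,u_0)\in\Psi^k(\omega,v)$, producing the adjoint variables $(\bar p,\bar\mu,\bar\lambda)$ with $\bar\mu=(\mu_H,\mu_L)$ monotone nonincreasing, the respective constancy on the inactive sets of $h_H$ and $h_L$, and the adjoint inclusion $(-\dot{\bar p},\dot y,\dot x)\in\partial^C_{(y,x,\bar p)}H_L^k$. The term $\bar\sigma^k$, while nonsmooth at the kinks of the minimum defining $c(\gamma_k,\cdot,\cdot)$ and at the boundary $|x-y|=R_1$, is locally Lipschitz, so the Clarke subdifferential operates cleanly. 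The transversality conditions yield the admissible pair $(\bar p(0),\bar p(T^*))\in\bar P_L^k$, where the endpoint sets reflect the initial state constraint $x(0)\in Q_1+y_0$ and the free terminal $x(T^*)$ balanced against $\nabla_x h_L$.

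The heart of the argument, and what I expect to be the main obstacle, is the sensitivity calculation: showing that the first-order variation of the cost of $(P_L^k)$ under perturbations of $(\omega,v)$ is captured precisely by the expression inside the set in (\ref{grad-aval-k}). The idea is to perturb $(\omega,v)$ by admissible directions, propagate the perturbation through the sensitivity equations for $(y,x,t)$, invoke the adjoint equation to rewrite the first-order cost change as a pairing of $(\zeta_1,\zeta_2)$ with $(\delta\omega,\delta v)$, and identify the coefficient. For the $\omega$-component, an elementary computation along the optimal trajectory yields $-\bar\lambda\zeta_1=\overline H_L^k(y,x,v,u,\bar p,\bar\mu,\bar\lambda)$, owing to the fact that $\omega$ multiplies the entire right-hand side of $(\overline D^k)$ and the cost integrand. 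For the $v$-component, the corresponding coefficient is $(p_H-\mu_H(y-q_0)+\mu_L(x-y))\omega$, with the normal cone $N_{\mathcal{V}}(v)$ accounting for the constraint $v(t)\in V$. The obstruction is that the multiplier $\bar\lambda$ may vanish (abnormal case), so the formula must be read as an inclusion rather than an identity; the nontriviality condition $\|\bar p\|_{L_\infty}+\bar\lambda+\|\bar\mu\|_{TV}\neq 0$ prevents a complete collapse. Finally, taking the convex hull over all $x\in\Psi^k(\omega,v)$ and over all admissible multiplier tuples, together with the local Lipschitz property of $\varphi^k$ and the characterization $\partial^C\varphi=\textrm{co}\,\partial\varphi$, delivers the claimed representation of $\partial^C\varphi^k(\omega,v)$.
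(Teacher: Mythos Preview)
Your overall framework is right (Ye's strategy plus the Gamkrelidze-form maximum principle of \cite{KLP-paperIEEE-LCSS2020}), but the core mechanism you describe differs from the paper's and, as stated, has a gap.

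The paper does \emph{not} obtain the representation by applying the maximum principle to $(P_L^k)$ and then running a separate ``sensitivity calculation'' in which perturbations $(\delta\omega,\delta v)$ are propagated through linearized equations. Instead, it begins from an arbitrary proximal subgradient $(\zeta_1,\zeta_2)\in\partial^P\varphi^k(\omega,v)$, uses the proximal inequality to show that the reference process $(x,u,u_0;\omega,v)$ is optimal for an \emph{auxiliary} problem $(P_{aux})$ whose cost is
\[
\int_0^{T^*}\Big(|u_L'|^2\omega'-\langle(\zeta_1,\zeta_2),(\omega',v')\rangle\Big)\,dt+\bar c\|\omega-\omega'\|_2^2+\bar c\|v-v'\|_2^2,
\]
and then applies the maximum principle to $(P_{aux})$. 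Because $(\omega',v')$ now appear as controls of $(P_{aux})$, the maximum condition with respect to $(\omega',v')$ directly delivers the inclusion $-\bar\lambda(\zeta_1,\zeta_2)\in\{\overline H_L^k\}\times\nabla_v\overline H_L^k\,\omega+\{0\}\times N_{\mathcal V}(v)$; no variational pairing or sensitivity equations are needed. This auxiliary-problem device is precisely what makes the argument rigorous in the nonsmooth/possibly abnormal setting you flagged as an obstruction: you never differentiate $\varphi^k$, you only exploit optimality for $(P_{aux})$. Passage from $\partial^P$ to $\partial$ is then by limits along sequences $(\omega_i,v_i)\to(\omega,v)$ with compactness of the multipliers, and the convex hull over $\Psi^k(\omega,v)$ gives $\partial^C$.

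Your route to Lipschitz continuity is also different. You propose a Filippov-type argument on the feasible multifunction up front; the paper instead establishes Lipschitz continuity \emph{after} the subdifferential characterization, by invoking the Clarke--Stern--Wolenski criterion \cite{CSW1993subgradientcriteria}: the proximal subgradients produced above are bounded in $L_2$ uniformly near $(\omega,v)$ (from the boundedness of the adjoint arcs), and that bound is the Lipschitz rank. Your approach could likely be made to work, but it is an extra argument the paper avoids.
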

\vspace{-.3cm}
Again, to unburden the presentation, the proof of this result is included in the Appendix.

\vspace{-.5cm}
\subsection{Flattening the Bilevel Approximation Problem and Application of Ekeland's Variational Principle}\label{subsec:flat}
Now, we construct an approximate flattened single-level problem to which the Maximum Principle of Pontryagin can be applied. By considering the change of time variable, and the state variable component $z$, $\dot z=|u_L|^2$, $z(0)=0$, we state the flattened single-level approximated problem as an equivalent optimal control problem $(P_F^k)$ on a fixed time interval $[0,T^*]$, (once again, we do not relabel),

\vspace{-.4cm}
$$\mbox{Minimize }t(T^*)\mbox{ subject to } (D_F^k)\left\{\begin{array}{l}
\dot y \hsmm =\hsmm v\omega, \;\dot x \hsmm =\hsmm f^k(y,x,u,u_0)\omega, \; \dot z  \hsmm =\hsmm |u_L|^2\omega, \; \dot t \hsmm = \hsmm \omega \; [0,T^*]\hsmm -\hsmm \mbox{a.e.}, \nonumber\\
y(0) \hsmm =\hsmm y_0,  \; x(0) \hsmm\in\hsmm Q_1\hsmm+\hsmm y_0,\;z(0) \hsmm =\hsmm 0, \; t(0) \hsmm =\hsmm 0,\; y(T^*) \hsmm\in\hsmm \bar E, \nonumber \\
v  \hsmm\in\hsmm  \mathcal{V}, \; u \hsmm\in\hsmm \mathcal{U}, \; u_0 \hsmm\in\hsmm  \mathcal{U}_0, \; \omega \hsmm\in\hsmm L_2([0,T^*];\R^+),\nonumber \\
h_H(y) \leq 0 , \;\; h_L(x,y) \leq 0  \;\; \forall t \in [0,T^*], \quad z(T^*) - \varphi^{k}(\omega,v) \leq 0, \label{ineq-varphi}\end{array}\right.
$$
where $\varphi^k(\omega,v)$ is as defined in (\ref{expression of value function}). The solution to this problem is not known, and, thus, in order to pursue with the derivation of the necessary conditions of optimality, we shall use Ekeland's variational principle, (cf. \cite{ekeland1974variational}, \cite[Theorem 3.3.1]{vinter2010optimal}).  Since the approximating family of problems is only at the lower level, the extension of Propositions \ref{proposition: nonempty feasible set}, and \ref{prop: uniqueness solution} to the current flattened control problem required to construct an appropriate sequence of auxiliary optimal control problems is straightforward. By ``appropriate'' it is meant that the assumptions underlying the application of Ekeland's variational principle hold.
\vspace{-.2cm}
\begin{proposition}\label{propo: ekeland principle} Let H1-H6 be in force and let $(y^*,x^*,z^*,t^*,v^*,u^*,u_0^*,\omega^*)$, with $t^*(T^*)=T^*$,  be an optimal process to problem $(P_F)$. Then, for some positive sequence $\{\varepsilon_k\}$, with $\varepsilon_k \downarrow 0$, as $k\to\infty$ there exists, for each $k$, a solution $(y_k^*, x_k^*, z_k^*,t_k^*,v_k^*, u_k^*,u_{0,k}^*,\omega_k^*)$ to the following perturbed version of $(P_F^k)$
\vsm\vsm
\begin{eqnarray*}\hsm(\widetilde P_F^k)&&\mbox{ Minimize }  t_k(T^*)\hsmm +\hsmm \varepsilon_k \left( | x_k^*(0)\hsmm -\hsmm x_k(0)|\hsmm+ \hsmm \Delta((v_k^*,u_k^*,u_{0,k}^*,\omega_k^*), (v_k,u_k,u_{0,k},\omega_k))\right) \nonumber \\
&& \hsm\hsm \mbox{ subject to } (D_F^k),\mbox{ and }  | x^* (0)\hsmm -\hsmm x_k(0)|\hsmm +\hsmm \Delta((v^*,u^*,u_{0}^*,\omega^*), (v_k,u_k,u_{0,k},\omega_k))\hsmm < \hsmm\varepsilon_k, \nonumber\end{eqnarray*}
\vspace{-.6cm}

\noindent where, $\Delta(\cdot.\cdot)\hsmm :\hsmm (\mathcal{V}\hsmm\times\hsmm \mathcal{U} \hsmm\times\hsmm \mathcal{U}_0\hsmm\times\hsmm L_2)\times (\mathcal{V}\hsmm\times\hsmm \mathcal{U}\hsmm\times\hsmm\mathcal{U}_0\hsmm\times\hsmm L_2)\to [0,\infty)$ is an extension of (\ref{expression of Ekeland metric}) to encompass the additional control $\omega$ in $L_2$. Thus, for a subsequence (we do not relabel), we have
\vsm\vsm
\begin{equation} (y_k^*, x_k^*,z_k^*,t_k^*) \to ( y^*, x^*, z^*, t^*) \textrm{ uniformly, and} \;\;(v_k^*,u_k^*,u_{0_k}^*,\omega_k^*) \to ( v^*, u^*, u_0^*, \omega^*) \textrm{ a.e. on } [0,T^*]. \nonumber\end{equation}
\end{proposition}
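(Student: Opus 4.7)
The strategy is to combine the Lipschitz-continuous dynamics approximation of Proposition \ref{prop: uniqueness solution} with a classical application of Ekeland's variational principle on the control space endowed with the metric $\Delta$ (suitably extended). First, I would verify that the space $\mathcal{V}\times\mathcal{U}\times\mathcal{U}_0\times L_2([0,T^*];\R^+)$, equipped with the extended $L_1$-type metric $\Delta$ (together with $|x(0)-\tilde x(0)|$ on the initial state), is a complete metric space. Completeness is the key prerequisite for Ekeland, and it follows from the fact that $\mathcal{V}$, $\mathcal{U}$, $\mathcal{U}_0$ are closed subsets of the respective $L_1$-spaces (their pointwise constraint sets being compact), and $L_2([0,T^*];\R^+)$ is closed in $L_1$ after truncation; joint completeness is standard.

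Next, using Proposition \ref{prop: uniqueness solution} applied to the optimal process $(T^*,y^*,x^*,u^*,u_0^*)$ of $(P_F)$ (after the time change of variable, with $\omega^*$ extracted from $\dot t^*=\omega^*$), I would build for each $k$ a control tuple $(v_k,u_k,u_{0,k},\omega_k)$ and initial state $x_k(0)$ such that the associated trajectory solves $(D_F^k)$ and satisfies
\begin{equation*}
\eta_k := |x^*(0)-x_k(0)| + \Delta\bigl((v^*,u^*,u_0^*,\omega^*),(v_k,u_k,u_{0,k},\omega_k)\bigr) \to 0.
\end{equation*}
By uniform convergence of the approximating states, the state constraints $h_H\le 0$, $h_L\le 0$ and the endpoint constraint $y(T^*)\in \bar E$ hold in the limit, and an easy continuity argument (together with the Lipschitz continuity of $\varphi^k$ from Proposition \ref{subgrad-value}) shows that the constraint $z(T^*)-\varphi^k(\omega,v)\le 0$ can also be maintained up to a negligible perturbation of the upper-level control (here H4/H6 give the needed margin). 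This produces a feasible process for $(P_F^k)$ whose cost $t_k(T^*)$ differs from $T^*$ by $o(1)$ as $k\to\infty$.

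Then I would show that $(P_F^k)$ has an infimum $\alpha_k$ satisfying $\alpha_k \le T^* + \eta_k$ and, by lower semicontinuity (passing to the limit in $(D_F^k)$ via Proposition \ref{prop: uniqueness solution} and using H2, H3), that $\liminf_k \alpha_k \ge T^*$. Set
\begin{equation*}
\varepsilon_k := \max\bigl\{\sqrt{T^*-\alpha_k + \eta_k + 1/k},\; 1/\sqrt{k}\bigr\} \downarrow 0,
\end{equation*}
so that the above feasible approximating process is $\varepsilon_k^2$-optimal for $(P_F^k)$. Apply Ekeland's principle (\cite{ekeland1974variational}, \cite[Theorem 3.3.1]{vinter2010optimal}) on the complete metric space with cost $t(T^*)$ and parameter $\varepsilon_k^2$, choosing $\lambda=\varepsilon_k$, to obtain a minimizer $(y_k^*,x_k^*,z_k^*,t_k^*,v_k^*,u_k^*,u_{0,k}^*,\omega_k^*)$ of the perturbed problem $(\widetilde P_F^k)$, whose control tuple is within $\varepsilon_k$ (in the $\Delta$-metric together with the initial state) of the approximating tuple, hence within $\varepsilon_k+\eta_k$ of the original optimal process.

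Finally, uniform convergence of the states is obtained by passing to a subsequence: the ODEs in $(D_F^k)$ have right-hand sides bounded uniformly in $k$ on bounded sets (H1, H3, and the explicit bound $c(\gamma_k,x,y)\le M/R_1$ from (\ref{approx of normal cone})), hence $\{(y_k^*,x_k^*,z_k^*,t_k^*)\}$ is equicontinuous and uniformly bounded; Arzel\`a--Ascoli yields uniform convergence to $(y^*,x^*,z^*,t^*)$, and $L_1$-convergence of the controls (with pointwise values in compact sets) gives a.e. convergence along a subsequence. \textbf{The main obstacle} I expect is not Ekeland itself, but ensuring that the constructed sequence of nearly optimal processes for $(P_F^k)$ satisfies the value-function inequality $z(T^*)\le \varphi^k(\omega,v)$ despite the fact that $\varphi^k$ depends on $k$; this requires a careful use of Proposition \ref{subgrad-value} (Lipschitz continuity of $\varphi^k$ uniformly in $k$ on the relevant domain) together with the small slack provided by H4 to absorb the $O(\eta_k)$ perturbation on both sides of the inequality.
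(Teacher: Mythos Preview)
Your proposal is essentially correct and follows the same Ekeland-based outline as the paper: verify completeness of the control space under $\Delta$, construct feasible approximants for $(P_F^k)$ via Proposition \ref{prop: uniqueness solution}, establish $\varepsilon_k^2$-optimality, apply Ekeland's principle, and extract uniformly convergent state subsequences by Arzel\`a--Ascoli.

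The one substantive difference concerns the obstacle you flag---the value-function inequality $z(T^*)\le\varphi^k(\omega,v)$. You propose to handle it through the Lipschitz continuity of $\varphi^k$ together with the controllability slack from H4. The paper takes a more direct route: since $\mathcal{F}_L^k(t_k(T^*),y_k)$ is compact, one may simply choose the lower-level component of the approximating process to be an \emph{optimal} solution of $(P_L^k)$, forcing $z_k(T^*)=\varphi^k(\omega_k,v_k)$ exactly, so the constraint is automatic. This sidesteps any need for uniform-in-$k$ Lipschitz estimates on $\varphi^k$, though it tacitly assumes (reasonably, given Proposition \ref{prop: uniqueness solution}) that such an optimal lower-level process can still be taken with $\bar\theta_k\to\bar\theta^*$. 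Conversely, your treatment of the $\varepsilon_k^2$-optimality gap---introducing the infimum $\alpha_k$ and invoking lower semicontinuity---is more explicit than the paper's, which simply sets $\varepsilon_k^2:=|x^*(0)-x_k(0)|+\Delta(\bar\theta^*,\bar\theta_k)$ and asserts that this suffices without spelling out why the cost of the approximating process exceeds the infimum of $(P_F^k)$ by at most $\varepsilon_k^2$.
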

\vsm\vsm\vsm
A detailed proof of this proposition can be found in the Appendix.

\vspace{-.3cm}
\subsection{Partial Calmness of the Approximate Bilevel Problem $(\widetilde P_F^k)$}\label{subsec:partial}
From Proposition \ref{subgrad-value}, the value function to the lower level problem, $\varphi^k$, is Lipschitz continuous. This Lipschitzianity is a complementary property crucial for handling the degeneracy resulting from the functional inequality constraint $z(T^*)\hsmm - \hsmm \varphi^{k}(\omega,v) \leq 0$ in $(D^k_F)$, as it will be shown next. This degeneracy is due to the fact that, in the presence of this inequality, the standard constraint qualifications, such as Mangasarian-Fromovitz or linear independence constraint, do not hold.

In this stage, we eliminate the pathological situation of the degeneracy caused by the functional inequality constraint $z(T^*)\hsmm - \hsmm \varphi^{k}(\omega,v) \leq 0$ in $(D^k_F)$, by imposing the so-called {\it partial calmness condition} on $(P_F)$, and we use an approach based on an exact penalization technique. This consists in replacing the original problem by an equivalent one (i.e., with same solution) where the inequality $z(T^*)\hsmm -\hsmm \varphi^{k}(\omega,v) \leq 0$ is eliminated as a constraint, and a related penalty term is added to the cost functional. This idea has been used in \cite{ye1995optimality,ye1997optimal} to obtain necessary conditions of optimality, and later in \cite{dempe2014necessary,dempe2007new,benita2016bilevel,benita2016bilevelstateconstraint} in a wide variety of instances. In what follows, we give results on the partial calmness of the approximate problem $(\widetilde P_F^k)$, by using the notation of the previous subsection.
\begin{proposition}\label{Prop-part-cal}
The partial calmness of $(P_F)$ implies the partial calmness of $(\widetilde P_F^k)$ for any positive sequence $\{\varepsilon_k\} $ with $ \varepsilon_k\hsm \downarrow\hsm 0$.
\end{proposition}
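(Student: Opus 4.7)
The plan is to transfer the partial calmness of $(P_F)$ to $(\widetilde P_F^k)$ by re-expressing the property as an exact-penalty statement, exploiting the close approximation of $(D^k)$ to $(D)$ near the optimum, the uniform Lipschitz control of $\varphi^k$ furnished by Proposition \ref{subgrad-value}, and the locality imposed by the Ekeland side-constraint in the definition of $(\widetilde P_F^k)$. Concretely, I would first recast the partial calmness of $(P_F)$ at $(T^*,y^*,x^*,u_L^*,v^*)$ with modulus $\rho$ as the statement that this point minimizes the exact-penalty cost
$$J_H(T,y;x(0),u_L) + \rho \max\left\{0,\ \int_0^T |u_L(s)|^2\,ds - \varphi(T, v)\right\}$$
over all processes satisfying the constraints of $(P_F)$ except (\ref{eq:value function ineq}). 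The objective becomes to show that the Ekeland solution $\theta_k^* = (y_k^*, x_k^*, z_k^*, t_k^*, v_k^*, u_k^*, u_{0,k}^*, \omega_k^*)$ minimizes the analogous penalty cost built from $\tilde J_H^k$ and $(z_k(T^*) - \varphi^k(\omega_k,v_k))^+$ for a suitable modulus $\rho_k$.

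Next I would establish that $\varphi^k$ converges to $\varphi$ locally uniformly on a neighborhood of $(\omega^*,v^*)$. The key ingredients are: the truncation cap $M/R_1$ inside $c(\gamma_k,x,y)$ that keeps the feedback magnitude uniformly bounded; the exponential factor $\gamma_k e^{\gamma_k h_L}$ that drives trajectories of $(D^k)$ pointwise to those of the truncated-normal-cone system $(D)$ as $k\to\infty$; the uniform trajectory convergence provided by Proposition \ref{prop: uniqueness solution}; and the uniform-in-$k$ Lipschitz control of $\varphi^k$ supplied by Proposition \ref{subgrad-value}. Together these yield a bound $|\varphi^k(\omega,v) - \varphi(\bar\omega,v)| \le \eta_k$ with $\eta_k\downarrow 0$ for $(\omega,v)$ in a fixed $L_2$-neighborhood of $(\omega^*,v^*)$ (after the time rescaling $\bar\omega \equiv \omega$). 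Then, given any $\theta_k$ satisfying the constraints of $(\widetilde P_F^k)$ except (\ref{ineq-varphi}), the Ekeland constraint confines $\theta_k$ to an $\varepsilon_k$-neighborhood of $\theta^*$; I associate $\theta_k$ with the shadow process of $(P_F)$ obtained by running the same controls $(v_k,u_k,u_{0,k})$ through the limit dynamics $(D)$, apply the exact-penalty form of partial calmness of $(P_F)$ at this shadow process, and convert the resulting penalty in $\varphi$ to one in $\varphi^k$ using the $\eta_k$-bound. The $\varepsilon_k$-order Ekeland cost perturbation is absorbed by slightly enlarging the modulus, so that $\rho_k = \rho + o(1)$ will suffice for $k$ large.

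The main obstacle I anticipate is the bookkeeping needed to pass a loosely feasible process of $(\widetilde P_F^k)$ to a loosely feasible process of $(P_F)$ without violating the state constraint $h_L(x,y)\le 0$: the trajectory $x_k$ of $(D^k)$ driven by the penalty-like drift $-u_0 c(\gamma_k,x,y)(x-y)$ need not yield a trajectory of $(D)$ satisfying $h_L\le 0$ under the same controls. This is handled by exploiting the locality from Ekeland (which keeps $\theta_k$ within $O(\varepsilon_k)$ of $\theta^*$ where $h_L$ is satisfied with room), together with the uniform trajectory convergence of Proposition \ref{prop: uniqueness solution}, which guarantees that the shadow trajectory deviates by $o(1)$ from $x_k$ and hence stays inside the state-constraint tube up to a perturbation absorbed into $\eta_k$. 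Once this reconciliation is in place, the exact-penalty inequality of $(P_F)$ propagates cleanly to $(\widetilde P_F^k)$ and delivers the claimed partial calmness.
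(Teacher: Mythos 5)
Your route is genuinely different from the paper's, and it contains a gap that I do not see how to close. The paper does not attempt a direct transfer of the exact-penalty inequality from $(P_F)$ to $(\widetilde P_F^k)$; it argues by contradiction: assuming $(\widetilde P_F^k)$ is not partially calm, it extracts for each $k$ a violating feasible process inside the Ekeland ball, uses only the sign information $z_k(T^*)-\varphi^k(\omega_k,v_k)\ge 0$ and the nonnegativity of the Ekeland perturbation to deduce $t_k^*(T^*)>t_k(T^*)+\varepsilon_k(\cdot)$, and then contradicts the (Ekeland) minimality of $(\chi_k^*,\bar\theta_k^*)$ via the convergences of Proposition \ref{propo: ekeland principle}. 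No comparison between $\varphi$ and $\varphi^k$, and no re-embedding of $(D^k)$-trajectories into $(D)$, is ever needed.

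The decisive flaw in your argument is the claim that the additive errors can be ``absorbed by slightly enlarging the modulus.'' Partial calmness of $(\widetilde P_F^k)$ is an exact inequality for each fixed $k$: the penalty term $z_k(T^*)-\varphi^k(\omega_k,v_k)$ \emph{vanishes} whenever $u_{L,k}$ is lower-level optimal, so no choice of $\rho_k$, however large, can compensate the fixed additive discrepancies your chain of inequalities produces — namely $\rho\,\eta_k$ from replacing $\varphi$ by $\varphi^k$, the gap $|t_k^*(T^*)-t^*(T^*)|$ from comparing against the Ekeland minimizer rather than $(T^*,y^*,\dots)$, and the deviation of the shadow trajectory. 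You obtain at best partial calmness up to an additive $o(1)$, which is strictly weaker than the assertion. A second, independent problem is the shadow process itself: Proposition \ref{prop: uniqueness solution} approximates a $(D)$-feasible process by $(D^k)$-feasible ones, not the reverse direction you need, and ``running the same controls through $(D)$'' is not even well posed since $u_0$ enters $(D)$ only through the normal cone on $\partial(Q_1+y)$ while it acts in the interior for $(D^k)$. Moreover, your repair via ``$h_L$ satisfied with room'' fails precisely in the interesting regime: the truncated normal cone matters only when the optimal trajectory rides the boundary, i.e.\ when $h_L(x^*,y^*)=0$ on a nontrivial set, so there is no slack in the state constraint to absorb the trajectory perturbation. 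If you want a correct proof, the contradiction argument along the sequence — which sidesteps both issues — is the way to go.
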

\begin{proposition}\label{theorem:partial calmness equivalency}
Let  $ (\chi_k^*,\bar\theta_k^*) $ be a solution to $(\widetilde P_F^k)$. If $(\widetilde P_F^{k})$ is partially calm at $(\chi_k^*,\bar\theta_k^*)$ with modulus $\rho_k$, then $(\chi_k^*,\bar\theta_k^*)$ solves the auxiliary optimal control problem
\begin{eqnarray*}({\bf P}_F^k)\mbox{ Minimize }&&  t_k(T^*) \hsmm +\hsmm \varepsilon_k \left(|x_k(0)\hsmm -\hsmm x_k^*(0)| \hsmm +\hsmm
\Delta (\bar\theta_k,\bar\theta_k^* )\right)\hsmm + \hsmm \rho_k \left(z_k(T^*)-\varphi_k(\omega_k, v_k)\right),\nonumber\\
\mbox{subject to } && (D_F^k), \mbox{ and } |x_k(0)\hsmm -\hsmm x_k^*(0)| \hsmm +\hsmm \Delta (\bar\theta_k,\bar\theta^* )\hsmm < \hsmm\varepsilon_k,
\end{eqnarray*}
where $\bar\theta_k^*\hsmm:=\hsmm(v_k^*,u_k^*,u_{0,k}^*,\omega_k^*), \;\bar\theta^*\hsmm:=\hsmm(v^*,u^*,u_{0}^*,\omega^*),\; \bar\theta_k\hsmm :=\hsmm(v_k,u_k,u_{0,k},\omega_k)$, and $\Delta(\cdot,\cdot)$ is as in (\ref{expression of Ekeland metric}) expanded to encompass the additional control.
\end{proposition}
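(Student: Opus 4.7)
The plan is to verify directly that the reference point $(\chi_k^*, \bar\theta_k^*)$ is both admissible for $({\bf P}_F^k)$ and attains its minimum. A key preliminary observation is the structural identity
\begin{equation*}
z_k(T^*) - \varphi^k(\omega_k, v_k) \ge 0
\end{equation*}
that holds on the entire feasible set of $({\bf P}_F^k)$: indeed, $z_k(T^*) = \int_0^{T^*} |u_{L,k}(s)|^2 \omega_k(s)\,ds$ is the lower-level cost realized by an admissible triple $(x_k,u_k,u_{0,k})$, whereas $\varphi^k(\omega_k, v_k)$ is, by definition, the infimum of such costs. Consequently the penalty term in the objective of $({\bf P}_F^k)$ is everywhere nonnegative. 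In particular, at $(\chi_k^*, \bar\theta_k^*)$, which is feasible for $(\widetilde P_F^k)$ and thus also satisfies $z_k^*(T^*) - \varphi^k(\omega_k^*, v_k^*) \le 0$, equality must hold, the penalty term vanishes, the Ekeland perturbation term vanishes (the center of its own ball), and the cost of $({\bf P}_F^k)$ at this point is exactly $t_k^*(T^*)$.

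For any other admissible $(\chi_k, \bar\theta_k)$ for $({\bf P}_F^k)$ — satisfying every constraint of $(D_F^k)$ except possibly the value-function inequality, and lying inside the Ekeland ball around $(\chi_k^*, \bar\theta_k^*)$ — I would apply the partial calmness of $(\widetilde P_F^k)$ furnished by Proposition \ref{Prop-part-cal}. Specialized to the present variables, that inequality reads
\begin{equation*}
t_k(T^*) + \varepsilon_k\bigl(|x_k(0) - x_k^*(0)| + \Delta(\bar\theta_k, \bar\theta_k^*)\bigr) \ge t_k^*(T^*) - \rho_k \bigl(z_k(T^*) - \varphi^k(\omega_k, v_k)\bigr),
\end{equation*}
and after transposing the $\rho_k$ term it is precisely the statement that the objective of $({\bf P}_F^k)$ evaluated at $(\chi_k, \bar\theta_k)$ dominates $t_k^*(T^*)$. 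Combined with the computation of the previous paragraph, this yields the minimality of $(\chi_k^*, \bar\theta_k^*)$ in $({\bf P}_F^k)$.

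The proof is thus largely bookkeeping once the partial calmness transfer of Proposition \ref{Prop-part-cal} is in hand. The main subtlety I expect to flag is interpretive: partial calmness of $(\widetilde P_F^k)$ must be read as applying to points that satisfy every constraint of that problem \emph{except possibly} the value-function inequality $z_k(T^*) - \varphi^k(\omega_k, v_k) \le 0$; otherwise, on the strictly feasible set this slack is identically zero and the inequality reduces to mere optimality. This is the standard reading in the Ye--Ye exact penalization theory \cite{ye1995optimality,ye1997optimal} on which the preceding Proposition is built, so no further technical machinery beyond invoking it is needed; the sign convention $z_k(T^*) - \varphi^k \ge 0$, together with $\rho_k \ge 0$, then makes the rearrangement from the calmness inequality to the penalized objective immediate.
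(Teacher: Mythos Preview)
Your proof is correct and takes essentially the same approach as the paper's own (one-sentence) proof, which merely invokes the definition of partial calmness, the Lipschitz continuity of $\varphi^k$, and the identity $z_k^*(T^*)-\varphi^k(\omega_k^*,v_k^*)=0$ while referring to \cite[Proposition~3.4]{benita2016bilevelstateconstraint} for the standard exact-penalization argument. You have in fact supplied more detail than the paper does, including the nonnegativity of the penalty term and the correct reading of partial calmness as applying to points satisfying all constraints of $(\widetilde P_F^k)$ except the value-function inequality.
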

\begin{proof}
The proof goes along the lines of \cite[Proposition 3.4]{benita2016bilevelstateconstraint}, by directly using the definition of partial calmness (cf. Definition \ref{def:partial calmness}), the Lipschitz continuity of $\varphi_k$, and by noting that $z_k^*(T^*) \hsmm -\hsmm \varphi^k( \omega_k^*, v_k^*)=0$.
\end{proof}
\subsection{Necessary Conditions of Optimality for $({\bf P}_F^k)$}\label{subsec:necessary condition approx}

We first note that, from the Lipschitz continuity of $\varphi^k$, as discussed in the previous step, we can write
$\partial^C \varphi^k(\omega,v)= \textrm{co } \partial \varphi^k(\omega,v)  $ where $ \partial^C \varphi^k(\omega,v)$ is the Clarke subdifferential of $\varphi^k$ at $(\omega,v)$, and $\textrm{co } \partial \varphi^k(\omega,v)$ stands for the closure of the convex hull of the Mordukhovich (limiting) subdifferential of $\varphi^k$ at $(\omega,v)$. Moreover the Clarke subdifferential is homogeneous in the sense that
$\partial^C (-\varphi^k) (\omega,v) = - \partial^C \varphi^k(\omega,v)$ while this property fails for the Mordukhovich (limiting) subdifferential. This property will be crucial in the derivation of necessary conditions of optimality presented below.

Now, we are ready to establish the necessary conditions of optimality to $({\bf P}_F^k)$. We adopt the approach used in the proof of Theorem 3.2 in \cite{ye1997optimal}. Clearly, the cost functional of $({\bf P}_F^k)$ makes it a nonstandard optimal control problem. Thus, in order to derive the necessary conditions of optimality, the problem is seen as a nonlinear programming problem in appropriate infinite dimensional spaces. Then, we apply the Fermat principle to the associated Lagrangian (the required assumptions hold) and decode the obtained conditions in terms of the data of $({\bf P}_F^k)$. Albeit rather technical, these steps are straightforward, and, thus, we omit them, and jump directly to the resulting conditions which we choose to cast in the Gamkrelidze's form of the Hamilton-Pontryagin function.

Let $c(\gamma_k,y,x)$ be as defined in Subsection \ref{subsec:approx lower level}, and denote $ l(a,b)$ by either $ l^*_k$ or by $l^*_k(a)$ if its arguments are, respectively, either $(a_k^*,b_k^*)$ or $(a,b^*_k)$. Recall also $f^k(y,x,u,u_0)\hsm=\hsm f(x,u)- u_0 c(\gamma_k,y,x)(x-y)$, and let $ r^k =\lambda^k \rho^k$. The Hamilton-Pontryagin function in Gamkrelidze's form of $({\bf P}_F^k)$ is given by
\vspace{-.3cm}
\begin{equation}
\vspace{-.3cm}H_F^k(\chi,\bar\theta, q_H^k,q_L^k,\nu_H^k,\nu_L^k, \lambda^k,r^k; \bar \theta_k^*)\hsmm =\hsmm \widetilde H^k_F(y,x,v,u,u_0,q_H^k,q_L^k,\nu_H^k,\nu_L^k, \lambda^k,r^k)\omega\hsmm -\hsmm\lambda^k \varepsilon_k|\bar\theta\hsmm -\hsmm\bar\theta_k^*|,\nonumber
\end{equation}
\vspace{-.3cm}
\noindent where $\chi=(y,x,z,t)$, $\bar\theta=(v,u,u_0,\omega)$, and\vsm
\begin{equation}\tilde H^k_F(\cdot)\hsmm =\hsmm\langle q_H^k\hsmm - \hsmm \nu_H^k(y\hsmm -\hsmm q_0)\hsmm +\hsmm \nu_L^k(x\hsmm -\hsmm y), v\rangle \hsmm+\hsmm \langle q_L^k\hsm - \hsmm \nu_L^k(x\hsmm -\hsmm y), f(x,u)\hsmm -\hsmm c(\gamma_k,y,x)(x \hsmm -\hsmm y)u_0\rangle
\hsmm - \hsmm r^k(|u|^2\hsm+\hsmm |u_0|^2)\hsmm -\hsm \lambda^k \hsm .\label{til-H}
\end{equation}
Let $ (\chi_k^*,\bar\theta_k^*)$ be a minimizer to $({\bf P}_F^k)$. Then, there exists a multiplier\vsm
$$( q_H^k,q_L^k,\nu_H^k,\nu_L^k,\lambda^k,r^k) \hsm\in\hsm AC\hsm \times\hsm AC\hsm\times\hsm N\hspace{-.03cm}BV \hsm\times\hsm N\hspace{-.03cm}BV\hsm\times\hsm\R^+ \hsm\times\hsm \R^+ $$
\noindent not all zero, where $\nu^k_H,\nu^k_L$ are non-increasing, being $\nu^k_H$, and $\nu^k_L$ constant on the subsets of $[0,T^*]$ in which the inequalities $|y^*_k-\tilde y|\hsmm >\hsmm R_1$, $ \forall \tilde y\hsmm\in \hsmm\partial Q$, and $ |y_k^*\hsmm -\hsmm x_k^*|\hsmm <\hsmm R_1$ respectively hold, satisfying:
\begin{itemize}
\item[1.] Adjoint equations.
\vspace{-.2cm}
\begin{eqnarray}
-\dot q_H^k(t) &\in & (- \nu_H^k(t)v_k^*(t)  + \partial_y \langle q_L^k(t)- \nu_L^k(t)( x^*(t)_k- y^*_k(t)), f^{k,*}(t)\rangle- \nu^k_L (t)v^*_k(t) )\omega^*_k(t),\nonumber\\
-\dot q_L^k(t) &\in & ( \partial_x\langle q_L^k(t)-\nu_L^k(t)(x^*_k(t)- y^*_k(t)),  f^{k,*}(t)\rangle+ \nu^k_L (t) v^*_k(t)) \omega^*_k(t), \qquad [0,T^*]-\mbox{a.e.}. \nonumber
\end{eqnarray}
\vspace{-1cm}
\item[2.] Boundary conditions.\vspace{.1cm}
\begin{itemize}
\item[] $ q_H^k(0)\hsmm\in\hsmm\R^n$, $ q_H^k(T^*) \hsmm\in\hsmm -N_{\bar E}(y^*_k(T^*))+ \nu^k_H(T^*)(y^*_k(T^*) \hsmm -\hsmm q_0)\hsmm -\hsmm \nu_L^k(T^*) (x^*_k(T^*) \hsmm -\hsmm y^*_k(T^*))$,\vspace{.1cm}
\item[] $q_L^k(0)\hsmm\in\hsmm N_{Q_1+y_0}( x^*_k(0) )\hsmm+\hsmm \nu_L^k(0) (x^*_k(0)\hsmm -\hsmm y^*_k(0))\hsmm -\hsmm \lambda^k {\varepsilon_k}\xi_{x,k}$, $q_L^k(T^*) \hsmm =\hsmm \nu_L^k(T^*)(x^*_k(T^*) \hsmm -\hsmm y^*_k(T^*))\vspace{.1cm}.$
\end{itemize}
\item[3.] Maximum condition. $(u_k^*,v_k^*,\omega_k^*,u^*_{0,k})$ takes values on $U\hsm\times\hsm V\hsm\times\hsm \R_0^+\hsm\times\hsm [0,1]$, maximizing $[0,T^*]$-a.e., the mapping
\begin{eqnarray} (u,v,\omega,u_0) & \mapsto & H_F^k(\chi^*,u,v,\omega,u_0, q_H^k,q_L^k,\nu_H^k,\nu_L^k, \lambda^k,r^k; u_k^*,v_k^*,\omega_k^*,u^*_{0,k})\hsmm +\hsmm r^k \varphi^k(w,v)\label{cond:max all}\end{eqnarray}
\noindent along $ (\chi^*(t),q_H^k(t),q_L^k(t),\nu_H^k(t),\nu_L^k(t))$.
\end{itemize}
\noindent Given the structure of $H_F^k$, and the fact that $\omega \geq 0 $, the maximum condition (\ref{cond:max all}) amounts to:
\begin{itemize}
\item For almost all $t\hsmm\in\hsmm [0,T^*]$, $u_k^*$ maximizes on $U$, and $u_{0,k}^* $ maximizes on $[0,1]$ the mappings
\begin{equation}u \mapsto \left(\langle q_L^k(t)\hsmm - \hsmm \nu_L^k(t) (x^*_k(t)\hsmm - \hsmm y^*_k(t)),f(x^*_k(t),u)\rangle\hsmm  -\hsmm r^k |u|^2 \right)\omega_k^*(t)\hsmm -  \hsmm \lambda^k\varepsilon_k|u\hsmm -\hsmm u^*_k(t)|, \label{max-uk}\end{equation}
\begin{equation}  \mbox {and }\qquad u_0  \mapsto \left( \bar m_k(t) u_0\hsmm -\hsmm r^k|u_0|^2\right)\omega_k^*(t)\hsmm - \hsmm \lambda^k\varepsilon_k|u_0-u_{0,k}^*(t)|,\label{max-u0}\end{equation}
\noindent respectively, where $\bar m_k(t)\hsmm =\hsmm - c(\gamma_k, y^*_k(t),x^*_k(t))\langle q_L^k(t) \hsmm -\hsmm \nu_L^k(t) (x^*_k(t) \hsmm-\hsmm y^*_k(t)), x^*_k(t) \hsmm -\hsmm y^*_k(t)\rangle$.
\item For almost all $t\hsmm\in\hsmm [0,T^*]$, the pair $(\omega_k^*,v_k^*)$ satisfies
\begin{equation} - ( \tilde H_F^{k,*}(t),\nabla_v\tilde H_F^{k,*}(t)\omega_k^*(t))\hsmm + \hsmm \lambda \varepsilon_k(\xi_{\omega,k}^*(t),\xi_{v,k}^*(t))\hsmm \in\hsmm r^k \partial_{(\omega ,v)}^C \varphi^k(\omega^*_k,v^*_k)(t)\hsmm + \hsmm N_{L_2}(\omega_k^*)(t)\hsmm\times\hsmm N_{\cal V}(v_k^*)(t). \label{max-vk+wk}\end{equation}
\end{itemize}
\noindent Here, $\xi_{a,k}^*(t)\hsm \in \hsm \partial|a \hsmm -\hsmm a^*_k(t)|_{a= a^*_k(t)}$. Note that $|\xi_{a,k}^*|\hsm =\hsm 1$, being $\partial^C_{(w,v)} \varphi^k (\omega^*_k, v^*_k)$ defined in Proposition \ref{subgrad-value} in terms of the corresponding set of multipliers $(p_H^k,p_L^k,\mu_H^k,\mu_L^k,\bar\lambda^k)$.

\subsection{Limit Taking}\label{subsec:limit taking}
In this last step, we show how our main result, Theorem \ref{theorem:main thm}, follows from the necessary conditions of optimality for $({\bf P}_F^k)$ by taking the appropriate limits as $k\hsm\to\hsm\infty$. The proof uses also Proposition \ref{propo: ekeland principle}, which asserts the existence of a subsequence $(\chi^*_k,\bar \theta_k^*)$ converging in an appropriate sense to $( y^*, x^*, z^*, t^*, v^*, u^*, u_0^*, \omega^*)$, with $ \dis \int_0^{T^*}\hsm\hsm\hsm\omega^*(\tau)d\tau\hsm =\hsm T^*$, the solution to the original bilevel problem $(P_F)$.

From the boundary conditions, the sequence of functions $\{\hsmm( q_H^k(T^*\hsmm), q_L^k(T^*\hsmm))\hsmm\}$ is uniformly bounded. Then, by Gronwall's Lemma, the $q_L^k$'s and $q_H^k$'s are uniformly bounded and $\dot q_H^k$'s, $\dot q_L^k$s are uniformly bounded in $L_1$. Thus, along a subsequence (we do not relabel), and by Dunford-Pettis theorem, $\{(q^k_H, q^k_L)\}$ converges uniformly to $(q_H,q_L)$, and $\{(\dot q^k_H, \dot q^k_L)\}$ converges weakly to $(\dot q_H,\dot q_L)$ in $L_1$ for some $q_H$, and $q_L $ in $ AC([0,T^*];\R^{n})$. Moreover, since $\{(\nu^k_H,\nu^k_L)\}$ and $\{(\lambda^k,r^k)\}$ are uniformly bounded, then, again, by standard arguments, there exist subsequences such that $\{(\nu^k_H,\nu^k_L)\}$ converges point-wisely to $(\nu_H,\nu_L)$ and $\{(\lambda^k,r^k)\}$ to $(\lambda ,r)$, for some nonincreasing functions $ \nu_L$, and $\nu_H$, and numbers $\lambda \hsm\geq\hsm 0$, and $ r \hsm\geq\hsm 0$. Furthermore, $\nu_H$, and $\nu_L$ are constant on $\{t\hsm\in\hsm[0,T^*]\hsm:\hsm |y^*\hsm -\hsmm \tilde y| \hsm >\hsm R_1\, \forall\,\tilde y\hsm\in \hsm\partial Q\}$, and on $\{t\hsmm\in\hsmm [0,T^*]\hsm:\hsm |y^*\hsmm -\hsmm x^*|\hsmm <\hsmm R_1\}$, respectively. Note also that, by taking the limit as $k\hsm\to\hsm\infty$ in (\ref{max-u0}), and using a result in the proof of Proposition \ref{prop: uniqueness solution}, we obtain the function $\sigma$ defined in section \ref{sec: main theorem}. By the compactness of trajectories theorem \cite[Theorem 2.5.3]{vinter2010optimal}, and Proposition \ref{propo: ekeland principle}, the pair $(q_H,q_L)$ satisfies the respective adjoint equations and boundary conditions of our main result.  Moreover, by passing to the limit in (\ref{max-uk}), and (\ref{max-u0}) leads to (\ref{max-u}).

Let us obtain the remaining conditions: conservation law (\ref{time-transversality}), and maximum condition on $v^*$ (\ref{max-vw}). First, let us consider (\ref{grad-aval-k}) in Proposition \ref{subgrad-value}. Observe that, due to the time independence of the data of $(P_L^k)$, there exists constant $\bar c_k$ such that $ \overline H_L^{k*}(t) \hsmm =\hsmm \bar c_k$. Moreover, the normality of $(P_L^k)$ follows easily from its structure, and the assumptions on its data, and thus, $\bar \lambda^k $ in (\ref{grad-aval-k}) is strictly positive. Thus, we have $ \partial^C_{(w,v)} \varphi^k (\omega^*_k, v^*_k)\hsmm =\hsmm \{ c_k\}\hsmm\times\hsmm\partial^C_{v} \varphi^k (\omega^*_k, v^*_k) $, where $\dis c_k=\frac{\bar c_k}{\bar \lambda^k}$. By rescaling, $ c_k$ takes values in a given compact set for all $k$, and, by choosing an appropriate subsequence, $ \{c_k\}$ converges to a certain constant $ c$.

Now, by taking the limits in (\ref{max-vk+wk}), with further subsequence extraction, and, given that $\partial^C \varphi^k (\omega^*_k, v^*_k)$ converges (in the Kuratowski sense) to $\partial^C \varphi (\omega^*, v^*)$,  we obtain $\dis H_H^*(t)\hsmm -\hsmm \lambda\hsmm\in\hsmm r c\hsmm-\hsmm N_{L_2([0,T^*],\R_0^+)}(\omega^*)$ and $ (-q_H\hsmm +\hsmm \nu_H(y^*\hsmm -\hsmm q_0) \hsmm - \hsmm \nu_L(x^*\hsmm - \hsmm y^*))\omega^*(t)\hsm\in\hsm -r\partial^C_v\varphi(\omega^*,v^*))\hsmm +\hsmm N_{\cal V}( v^*)$. By considering the original time parametrization (we do not relabel), we conclude that the last inclusion becomes (\ref{max-vw}) where the arguments of $\varphi $ now are $(T^*,v^*)$. By noting the existence of solutions to $(P_L)$ in a neighborhood of $T^*$, and that $\varphi$ is Lipschiz continuous, we may choose $ c\hsmm\in\hsmm\partial^C_T\varphi(T^*,v^*)$ in the original parametrization. Note also that, in the reparametrized time, $N_{L_2([0,T^*],\R_0^+)}(\omega^*)\hsmm =\hsmm\{0\}$ a.e.. With these two observations, we conclude that $H^*_H(t)\hsmm =\hsmm \lambda\hsmm + \hsmm r c $ and thus, the conservation law (\ref{time-transversality}) holds. This concludes the proof of Theorem \ref{theorem:main thm}.

\section{Conclusions}\label{sec: conclusion}
This article deals with an instance of a bilevel optimal control problem with the dynamics at the lower level given by a sweeping process. We formulate and prove a theorem on the necessary conditions of optimality for a simple instance of this problem, i.e., the cost functional of the lower level is scalar valued. To the best of our
knowledge, this is the first time that a problem of this type is addressed. The main techniques used in the main theorem's proof are the approximation of the sweeping
process at the lower level by a smooth Lipschitz function in the state variable and the flattening of the bilevel structure to a single-level problem with an objective function involving a penalty term representing the value function of the lower level problem.
	
There are several possible avenues to extend this work. One might investigate problems with multi-objective functions at the lower level. In these circumstances, a new solution concept is required, for instance in the sense of Pareto-like solution, among other possibilities. Moreover, one might consider problems with nonsmooth data and more complicated state constraint sets, leading to several challenges in handling the passing to the limit. These open questions will be the subject of forthcoming research.

\begin{acknowledgements}
The authors acknowledge the partial support of FCT R\&D Unit SYSTEC - POCI-01-0145-FEDER-006933, and R\& D Projects STRIDE - NORTE-01-0145-FEDER-000033, MAGIC - PTDC/POCI-01-0145-FEDER-032485, SNAP - NORTE-01-0145-FEDER-000085, HARMONY - NORTE-01-0145-FEDER-031411 funded by ERDF | COMPETE2020 | FCT/MEC | PT2020 | NORTE 2020, POCI and PIDDAC.
\end{acknowledgements}

\appendix  
\section*{Appendix}
We provide here the proofs of the propositions \ref{prop: uniqueness solution}-\ref{Prop-part-cal} stated in Section \ref{sec:proof}.

{\bf Proof of Proposition \ref{prop: uniqueness solution}.}  Fix some $k\in\N$. From Proposition \ref{proposition: nonempty feasible set}, let $(T_k,y_k)$ be a pair such that ${\mathcal F}^k_L(T_k,y_k) \neq \emptyset$, with $y_k(0)=y_0$, and such that $\dot y_k=v_k\in {\cal V}$. Then, there exists a triple $(x_k,u_k,u_{0,k})\in AC([0,T_k];\R^n)\hsmm\times\hsmm {\cal U}\hsmm \times\hsmm{\cal U}_0$, satisfying $(D^k)$. Here, and in what follows, the sets  ${\cal U}$ and ${\cal U}_0$ are defined on the time interval $[0,T_k]$.
	
The non-emptiness of the set of sequences $\left\{(T_k,y_k, x_k,v_k, u_k,u_{0,k})\right\}_{k=1}^\infty$ satisfying $1)-3)$ follows from the fact that, for each $k$, the velocity set of the dynamics of $(D^k)$ contains the one of the dynamics of $(P_L)$ (as observed in the proof of Proposition \ref{proposition: nonempty feasible set}), and, thus, obvious fact that the constant sequence $\{(T,y,x,v,u,u_0)\}$, which is feasible for $(D^k)$, satisfies $(x,u,u_0)\hsm\in\hsm{\cal F}_L(T,y)$. By inspection of the data of $(P_L)$ and $(P_H)$, we can construct a sequence with the stated properties, and such that, on $[0,\max\{T_k, T_{k+1}\}]$, and for some constant $\widetilde M\hsm\in\hsmm \R^+\hsmm$, $\Delta\left((v_k,u_k,u_{0,k}), (v_{k+1},u_{k+1},u_{0,k+1})\right) \hsm\leq\hsm \widetilde M 2^{-k}.$

The boundedness of $f(\cdot,\cdot)$ (by assumption H1), and of the approximation to the truncated normal cone defined by (\ref{approx of normal cone}) implies that $\{\dot x_k\}$ is a bounded sequence in $L_1$. Let us see now that $\{x_k\}$ is an equicontinous and uniformly bounded sequence. Indeed, take any $\epsilon>0$, and consider $\tau_1$, and $\tau_2$ such that $T_k>\tau_2 > \tau_1$, and $\dis \tau_2-\tau_1\hsm \leq\hsm \frac{\epsilon}{M_1+M}$. Then,

\vspace{-.6cm}
\begin{eqnarray*}
|x_k(\tau_2)\hsm -\hsm x_k(\tau_1)| \hsmm \leq\hsm  \int_{\tau_1}^{\tau_2}\hsm\hsm\hsmm |\dot x_k(s)| ds
& \leq & \hsm\int_{\tau_1}^{\tau_2}\hsm\hsm\hsm\left( |f(x_k(s),u_k(s))| \hsmm+\hsmm |u_{0,k}(s) c(\gamma_k, x_k(s),y_k(s)) (x_k(s)\hsmm -\hsmm y_k(s))|\right) ds \hsmm\leq \hsmm(M_1\hsmm+\hsmm M) (\tau_2\hsmm-\hsmm \tau_1)\hsm \leq\hsm \epsilon.
\end{eqnarray*}

\vspace{-.3cm}
Since the $x_k(0)$'s are bounded, the uniform boundedness of $\{x_k\}$ is obtained from the fact that, for all $t\hsmm\in\hsmm [0,\overline T]$, where $\dis \overline T= \max_{i \in\N}\{T,T_i\}$, we have $ |x_k(t)|\hsmm \le\hsmm |x_k(0)|\hsmm +\hsmm |x_k(t)-x_k(0)| \hsmm\leq\hsmm |x_k(0)| \hsmm +\hsmm (M_1\hsm +\hsm M) t \hsmm\leq\hsmm | x_k(0)| \hsmm +\hsmm (M_1\hsm +\hsm M) \overline T.$
Due to the uniform boundedness, and the equicontinuity of $\{x_k\}$, we conclude, from the application of Arzela-Ascoli theorem, the existence of a subsequence (without relabeling) such that $x_k$ converges uniformly to an absolutely continuous function $x$. From the boundedness of $\{\dot x_k\}$ in $L_1$, and, from the Dunford-Pettis theorem, there exists a subsequence (we do not relabel) $\{\dot x_k\}$ converging weakly in $L_1$ to some function $\xi\in L_1$. Moreover, since $\{x_k(0)\}$ is a bounded sequence, we arrange by further subsequence extraction that $ x_k(0) \to x(0)$ for some $x(0)\in Q_1+y_0$. Hence, $\dot x(t) = \xi(t)$ a.e., and

\vspace{-.5cm}
$$  x_k(t) = x_k(0) + \int_{0}^{t} \dot x_k(s) ds \to_k x(t) := x(0) + \int_{0}^{t} \xi(s) ds  \qquad \mbox{uniformly},$$

\vspace{-.2cm}
Next, we prove that $\dot x(t) \in f(x(t),u(t))- N_{Q_1+y(t)}^M(x(t))$ a.e.. From the convexity of $Q_1$, and from the definition of the normal cone to a convex set, this amounts to show the following inequality

\vspace{-.6cm}
\begin{equation} \label{inequality normal cone belonging}  \langle f(x(t),u(t))- \dot x(t), \tilde x- x(t) \rangle \leq 0 \quad \forall \tilde x \in Q_1+y(t).  \end{equation}

\vspace{-.2cm}
For this purpose, let us examine the quantity $ \langle f(x_k(t),u_k(t))- \dot x_k(t), \tilde x- x_k(t) \rangle  $ for the control $u_k$, and the pair $(T_k,y_k)$, for which we recall that $\mathcal{F}_L^k(T_k,y_k)\neq \emptyset$. We consider two cases: $\tilde x \in \mbox{int} (Q_1\hsmm +\hsmm y_k(t))$, and $\tilde x \in \partial(Q_1\hsmm +\hsmm y_k(t))$.
	
We start by taking an arbitrary $\tilde x \in \mbox{int} (Q_1\hsmm +\hsmm y_k(t))$. From the a.e. equality

\vspace{-.5cm}
$$ f(x_k(t),u_k(t))- \dot x_k(t)  = u_{0,k}(t) c(\gamma_k, x_k(t),y_k(t))( x_k(t)-y_k(t)), $$

\vspace{-.3cm}
\noi the convexity of $h_L$, and the expression of its gradient, $\nabla_x h_L(x_k(t),y_k(t))= x_k(t)-y_k(t)$, (there is no loss of generality in assuming $x_k(t)\neq y_k(t)$), we conclude that

\vspace{-.5cm}
\begin{equation} \langle f(x_k(t),u_k(t))- \dot x_k(t), \tilde x- x_k(t) \rangle \leq   u_{0,k}(t) c( \gamma_k,x_k(t),y_k(t))(h_L(\tilde x,y_k(t))- h_L(x_k(t),y_k(t))) .\label{inequality}\end{equation}

\vspace{-.2cm}
\noi Define $ Z_k(t,\tilde x):= u_{0,k}(t) c( \gamma_k,x_k(t),y_k(t))(h_L(\tilde x,y_k(t)) - h_L(x_k(t),y_k(t)))$.
From (\ref{eq:condition on gamma_k}), we have  that, either $  \dis \lim_{k\to\infty}\hsmm Z_k(t,\tilde x) = 0$, if $ x_k(t)\hsmm \in\hsmm \mbox{int} (Q_1\hsmm +\hsmm y_k(t))$,  or $ Z_k(t,\tilde x)\hsmm \leq\hsmm 0 $ if $ x_k(t) \hsmm \in\hsmm \partial(Q_1\hsmm +\hsmm y_k(t))$. Let us take the limit in (\ref{inequality}). As already proved, the sequence $\{\dot x_k\}$ converges weakly in $L_1$ to some $\xi$, such that $\dot x(t)=\xi(t)$ a.e.. By Mazur's theorem, there exists a sequence $\{\chi_n\}$, such that $\dis\chi_n= \sum_{k=n}^{\infty} \beta_k\dot x_k$, where $\beta_k \hsm\geq\hsm 0$ and $\dis\sum_{k=n}^{+\infty} \beta_k=1$, and $\chi_n$ converges in the $L_1$-norm to $\xi$. This also implies that, up to a subsequence (we do not relabel), $\{\chi_n\}$ converges to $\xi$ a.e..
	
Now, let us take a Lebesgue point $t\hsmm\in\hsmm [0,T_k]$, and some $\tau > 0$ such that $t\hsmm +\hsmm \tau \leq T _k$. Then, we may write

\vspace{-.6cm}
\begin{eqnarray}
\frac{1}{\tau}  \sum_{k=n}^{\infty} \beta_k \int_{t}^{t+\tau} \hsm\hsm\hsm Z_k(s,\tilde x) ds  &\geq &	\frac{1}{\tau} \sum_{k=n}^{\infty}\beta_k \int_{t}^{t+\tau} \hsm\hsm\hsm \langle f(x_k(s),u_k(s))- \dot x_k(s), \tilde x- x(s) \rangle ds \label{relation mazur}\\
&& \hspace{3cm} + \frac{1}{\tau} \sum_{k=n}^{\infty}\beta_k \int_{t}^{t+\tau} \hsm\hsm\hsm \langle f(x_k(s),u_k(s))- \dot x_k(s), x(s)- x_k(s) \rangle ds \nonumber\\
& \geq &	\frac{1}{\tau} \sum_{k=n}^{\infty}\beta_k \int_{t}^{t+\tau} \hsm\hsm\hsm \langle f(x_k(s),u_k(s))- \dot x_k(s), \tilde x- x(s) \rangle ds - M\frac{1}{\tau}\sum_{k=n}^{\infty}\beta_k \int_{t}^{t+\tau} \hsm\hsm\hsm | x(s)- x_k(s) | ds, \label{ineq-a}
\end{eqnarray}

\vspace{-.3cm}
\noi being inequality (\ref{ineq-a}) due to the boundedness of $|f(x_k(s),u_k(s))- \dot x_k(s)|$. From the uniform convergence of $x_k$ to $x$, we have that $\dis M\frac{1}{\tau}\int_{t}^{t+\tau} \hsm\hsm\hsm | x(s)- x_k(s) | ds \to 0$. Moreover, we can write

\vspace{-.5cm}
\begin{eqnarray*}  && \frac{1}{\tau} \sum_{k=n}^{\infty}\beta_k \int_{t}^{t+\tau} \hspace{-.3cm}\langle f(x_k(s),u_k(s))- \dot x_k(s), \tilde x- x(s) \rangle ds  = \\ && \hspace{2cm}\frac{1}{\tau}  \int_{t}^{t+\tau} \hspace{-.3cm}\langle f(x(s),u(s)) -\xi(s), \tilde x- x(s) \rangle ds + \frac{1}{\tau}  \int_{t}^{t+\tau} \hsm\hsmm \left\langle \sum_{k=n}^{\infty}\beta_k [f(x_k(s),u_k(s))- f(x(s),u_k(s))], \tilde x\hsmm - \hsmm x(s) \hsmm\right\rangle ds\\
&& \hspace{2cm} + \frac{1}{\tau}  \int_{t}^{t+\tau}\hsm\hsmm \left\langle \xi(s)- \sum_{k=n}^{\infty}\beta_k \dot x_k(s), \tilde x\hsmm -\hsmm x(s) \hsmm\right\rangle ds + \frac{1}{\tau}  \int_{t}^{t+\tau} \hsm\hsmm \left\langle \sum_{k=n}^{\infty}\beta_k f(x(s),u_k(s))\hsmm -\hsmm f(x(s),u(s)), \tilde x\hsmm -\hsmm x(s)\hsmm \right\rangle ds.
\end{eqnarray*}

\vspace{-.3cm}
It is easy to check that each one of the last three terms converges to zero. The second term is due to the fact that $\dis \sum_{k=n}^{\infty}\beta_k \dot x_k(t) \to \xi(t)$, the third one follows from the Lipschitz continuity of $f$ w.r.t. $x$, and the uniform convergence of $x_k$ to $x$, and the last one is a consequence of Mazur's theorem, by taking into account that $\{u_k\}$ weakly converges in $L_1$ to $u$, which entails that, on $[t,t+\tau)$, when $k\to\infty$, $\dis \sum_{k=n}^{\infty}\beta_k f(x(s),u_k(s))$ converges to $ f(x(s),u(s))$. By passing the relation (\ref{relation mazur}) to the limit in $n$, we obtain $\dis \frac{1}{\tau}  \int_{t}^{t+\tau} \hspace{-.3cm}\langle f(x(s),u(s)) -\xi(s), \tilde x- x(s) \rangle ds  \le \lim_{n\to \infty}\frac{1}{\tau}  \sum_{k=n}^{\infty} \beta_k \int_{t}^{t+\tau} \hspace{-.3cm} Z_k(s,\tilde x) ds \leq 0. $
Now, by taking the limit $\tau \to 0$, and, since $t$ is a Lebesgue point, then, for all $\tilde x\hsmm \in\hsmm \mbox{int}(Q_1\hsmm +\hsmm y(t))$, we conclude that

\vspace{-.4cm}
\begin{equation}\label{inequality lebesgue point} \langle f(x(t),u(t)) -\dot x(t), \tilde x- x(t) \rangle  \leq 0. \end{equation}
	
\vspace{-.2cm}
Let us consider now, any point $\tilde x \hsmm\in\hsmm \partial(Q_1\hsmm +\hsmm y)$. Take some $\lambda\hsmm \in\hsmm [0,1)$. Clearly,
$   h_L(\lambda(\tilde x-y)+y,y)= \frac{1}{2}(\lambda^2 - 1)R_1^2 <0.  $
We are, therefore, reduced to the previous case replacing $\tilde x$ in (\ref{inequality lebesgue point}) by $y+ \lambda (\tilde x -y)$. By taking the limit $\lambda\to 1$, and by using the continuity of $h_L(\cdot,y)$, we have that, for all $t$, $h_L(x(t),y(t)) \leq 0$. Thus, we conclude that $x(\cdot)$ is a solution to $(D)$.
	
It remains to show the uniqueness of the solution to $(D)$. This is a consequence of the hypomonotonicity of the truncated normal cone to a convex set (cf. \cite{poliquin2000local}), and of the application of the Gronwall's inequality. Indeed, let $(T,y)$ be a given parameter, and take a measurable $u$ such that $u(t) \in U$, $[0,T]$-a.e. By contradiction, let $x_1(\cdot)$, and $x_2(\cdot)$ be distinct solutions to $(D)$ with $x_1(0)=x_2(0)$, and let $\tilde v(x_1,y)$, and $\tilde v(x_2,y)$ be the velocity components of $x_1$, and $x_2$, respectively, in $N_{Q_1+y}^M(x_1)$, and $N_{Q_1+y}^M(x_2)$. Thus, we may write $	\langle \dot x_1- \dot x_2, x_1-x_2 \rangle\hsmm  = \hsmm\langle f(x_1,u) - \tilde v(x_1,y)- f( x_2,u) + \tilde v(x_2,y), x_1-x_2 \rangle$.
The Lipschitz continuity of $f(\cdot,u)$ (assumption H1) implies that

\vspace{-.5cm}
\begin{equation}\label{equation1}
\langle \dot x_1- \dot x_2, x_1-x_2 \rangle \leq  K_f |x_1-x_2|^2 - \langle  \tilde v(x_1,y) - \tilde v(x_2,y), x_1-x_2 \rangle.
\end{equation}

\vspace{-.2cm}
From the hypomonotonicity of the truncated normal cone, we may write the inequality

\vspace{-.5cm}
\begin{equation}\label{equation2}
		\langle \tilde v(x_1,y) - \tilde v(x_2,y), x_1-x_2 \rangle \geq - |x_1-x_2|^2.
\end{equation}

\vspace{-.2cm}
By replacing (\ref{equation2}) in (\ref{equation1}), we obtain $\dis \frac{1}{2} \frac{d}{dt} (|x_1-x_2|^2) \leq  (K_f+1) |x_1-x_2|^2 $. By applying Gronwall's lemma, and using the fact that $x_1(0)=x_2(0)$, we obtain $x_1(\cdot)=x_2(\cdot)$, thus confirming the uniqueness of the solution to $(D)$.

\medskip
	
{\bf Proof of Proposition \ref{subgrad-value}.} Here, the index $k$ in the specification of the multipliers, the state, and control variables is omitted for the sake of simplicity. Let $(T,y)$ be a parameter inherited from $(P_H)$ such that $\mathcal{F}^k_L(T,y) \hsm\neq\hsm \emptyset$, and denote by $(x,u_L)$ a feasible control process to $(P_L^k(T,y))$ whose existence is asserted by Proposition \ref{proposition: nonempty feasible set}. Then, $\varphi^k(\omega,v)$ is finite where $(\omega, v)$ corresponds to $(T,y)$. Standard compactness arguments (see \cite[Lemma 2.4]{ye1997optimal}) allow us to conclude that, for a sequence $\{(T_j,y_j)\}$ converging in $L_2$ to $(T,y)$, and a sequence of $\{(x_j,u_j)\}$ with $(x_j,u_j)$ feasible for $(P^k_L(T_j,y_j))$, there is a subsequence $\{x_j\}$ converging uniformly to $x$ for which $\exists\, u_L$ feasible, and such that $(x,u_L)$ is feasible for $(P^k_L(T,y))$, and $\dis J_L(x(0),u_L; T,y)\hsm \leq\hsm \lim\inf_j J_L(x(0)_j,u_j; T_j,y_j)$. This, with the boundedness of $\varphi^k$ in its domain, entails its lower semicontinuity.

Moreover, let $(T,y)$ be such that $\partial^P\varphi^k (\omega,v) \neq \emptyset$. Assume that such $(x,u_L)$ is an optimal solution to $(P_L^k)$, i.e. $(x,u_L) \in \Psi^k(\omega,v)$, being the latter the set of optimal solutions to $(P_L^k)$, and that $ \partial^P \varphi^k(\omega,v)\hsm \neq \hsm\emptyset$. Remark that, by \cite[Lemma 2.4]{ye1997optimal} applied to $\varphi^k$, this property holds a.e.. Let $(\zeta_1,\zeta_2)\hsm \in\hsm \partial^P \varphi^k(\omega,v)$. Here, $(\omega,v)$ are the controls associated with $(T,y)$ as defined in (\ref{omega definition}). By definition of the proximal subdifferential in the Hilbert space $L_2$, for some $\bar c>0$, and for all $(\omega',v')$ sufficiently near $(\omega,v)$ in the $L_2$-norm, i.e., $\exists\,\varepsilon \hsm >\hsm 0$ such that $\|(\omega',v')-(\omega,v)\|_2^2 <\varepsilon $, we have

\vspace{-.4cm}
\begin{equation}
\int_0^{T^*}\hsm\langle\zeta_1, \omega'-\omega\rangle dt  + \int_0^{T^*}\hsm\langle \zeta_2, v'-v)\rangle dt \leq \varphi^k(\omega',v') - \varphi^k(\omega,v) + \bar c\|\omega-\omega'\|_{2}^2 + \bar c \|v-v'\|_{2}^2.\nonumber \end{equation}

\vspace{-.2cm}
Here, $\|\cdot\|_2$ represents the $L_2$-norm. Equivalently,

\vspace{-.7cm}
\begin{eqnarray*}
&& \varphi^k(\omega',v') - \int_0^{T^*}\hsm\langle(\zeta_1(t), \zeta_2(t)),(\omega'(t),v'(t))\rangle dt + \bar c\|\omega-\omega'\|_{2}^2 + \bar c \|v-v'\|_{2}^2 \\
&&  \hspace{6cm}\geq  \int_{0}^{T^*}\hsm |u_L(t)|^2 w(t)dt - \int_0^{T^*}\hsm\langle (\zeta_1(t),\zeta_2(t)),(\omega(t), v(t))\rangle  dt.
\end{eqnarray*}

\vspace{-.4cm}
Denote by $(x',u_L')$ an admissible solution to the approximate lower level problem corresponding to $(\omega',v')$ (or equivalently $(T',y')$). Then,

\vspace{-.6cm}
\begin{eqnarray*}
&&	\int_{0}^{T^*} \hsm |u_L'(t)|^2\omega'(t) dt - \int_0^{T^*}\hsm\langle(\zeta_1(t), \zeta_2(t)),(\omega'(t),v'(t)) \rangle dt + \bar c\|\omega-\omega'\|_{2}^2  + \bar c \|v-v'\|_{2}^2 \\
&&  \hspace{6cm} \geq  \int_{0}^{T^*} \hsm |u_L(t)|^2 \omega(t)dt- \int_0^{T^*}\hsm \langle(\zeta_1(t),\zeta_2(t)),(\omega(t),v(t)) \rangle dt.
\end{eqnarray*}

\vspace{-.2cm}
This means that $(x,u,u_0;\omega,v)$ is an optimal solution to

\vspace{-.5cm}
\begin{eqnarray*}(P_{aux})\mbox{ Minimize }&&  \int_{0}^{T^*}\hsm\hsm\left[|u_L'(t)|^2\omega'(t) - \langle (\zeta_1(t),\zeta_2(t)),(\omega'(t),v'(t))\rangle \right] dt+ \bar c\|\omega-\omega'\|_2^2 + \bar c \|v-v'\|_{2}^2, \\
\mbox{subject to } && \dot x'= \left(\hsmm f(x',u') -u_0' \min \left\{\frac{M}{r_1}, \gamma_k e^{\gamma_k h_L(x',y')}\right\}(x'-y')\hsmm\right)\omega',\;\; \dot y'=v'\omega', \; \;\dot t = \omega'\quad [0,T^*]-\mbox{a.e.}, \\
&& x'(0) \in Q_1 + y_0', \;\; y'(0)=y_0', \;\; y'(T^*) \in \bar  E, \;\; t(0)=0, \\
&& v' \in \mathcal{V}, \;\; u'\in \mathcal{U}, \;\: u_0'\in \mathcal{U}_0, \;\; \omega' \in L_2([0,T^*],\R^+),\mbox{ and }  h_L(x',y') \le 0, \; h_H(y') \leq 0\;\; \mbox{on }[0,T^*].
\end{eqnarray*}

\vspace{-.2cm}
By considering $\overline H_L^k$ is as defined in (\ref{hamilton pontryagin function}), the Hamilton-Pontryagin function for $(P_{aux})$ in the Gamkrelidze's form is

\vspace{-.5cm}
$$H_L^k(y', x', v',u',\bar p,\bar \mu, \bar \lambda,\omega'; v,u,w) :=\left(\overline H_L^k+\bar\lambda \zeta_1\right)\omega'\hsmm +
\hsmm\bar\lambda\langle\zeta_2,v\rangle\hsmm -\hsmm \bar \lambda c\left( |\omega'\hsmm-\hsmm\omega|^2\hsmm+\hsmm |u'\hsmm-\hsmm u|^2\hsmm +\hsmm |v'\hsmm -\hsmm v|^2\right)\hsmm . $$

\vspace{-.2cm}
The application of the necessary conditions of optimality to the reference control process to $ (P_{aux})$ in the Gamkrelidze's form of \cite{arutyunov2011maximum} with nonsmooth data, \cite{KLP-paperIEEE-LCSS2020}, asserts the existence of a multiplier comprising an adjoint arc $\bar p\hsmm =\hsmm (p_H,p_L)\hsmm\in\hsmm AC([0,T^*];\R^{2n})$, nonincreasing scalar function $\bar \mu\hsmm =\hsmm (\mu_H,\mu_L) \hsmm\in\hsmm N\hspace{-.03cm}BV([0,T^*];\R^2)$, with $  \mu_H $ constant on $ \{t\hsmm\in\hsmm[0,T^*]\hsm:\hsmm |y-z|\hsmm >\hsmm R_1\; \forall\, z\hsmm\in \hsmm\partial Q\}$, and $ \mu_L $ constant on $\{t\hsmm\in\hsmm[0,T^*]\hsm:\hsmm |y-x|\hsmm <\hsmm R_1\}$, and a scalar $\bar \lambda\hsmm\geq \hsmm 0$, not all zero, satisfying:
\begin{itemize}
\item[i)] Adjoint and primal system: $\dis (- \dot{\bar p}, \dot y,\dot x) \in \partial^C_{(y',x',\bar p)} H_L^k(y, x, v,u,\bar p,\bar \mu, \bar \lambda,\omega) \quad [0, T^*]-\mbox{a.e.}$,\vspace{.1cm}
\item[ii)] Transversality conditions:  $ p_H(0)\hsm \in\hsm\R^n$, $p_H(T)\hsm\in\hsm - N_{\bar E}(y(T))\hsmm +\hsmm \mu_H(T) (y(T)\hsmm -\hsmm q_0)\hsmm - \hsmm\mu_L (T) (x(T)\hsmm -\hsmm y(T))$, $ p_L(0)\hsm \in\hsm N_{Q_1}(x(0)\hsmm -\hsmm y_0) $, and $p_L(T))\hsmm =\hsmm (\mu_L(0) (x(0)\hsmm - \hsmm y_0),\mu_L(T) (x(T)\hsmm - \hsmm y(T)))$, and \vspace{.1cm}
\item[iii)] Maximum condition on $(\omega',v')$: $\dis \max_{\omega',v'} \left\{\overline H_L^k(v')\omega'\hsmm +\hsmm \bar \lambda  \zeta_1\omega'\hsmm +\hsmm\bar \lambda \langle \zeta_2,v' \rangle\hsmm - \hsmm\bar \lambda \bar c |\omega\hsmm -\hsmm \omega'|^2\hsm -\hsmm \bar \lambda \bar c |v\hsmm -\hsmm v'|^2\right\}\hsmm  =\hsmm (\overline H_L^k (v)\hsmm + \hsmm\bar\lambda \zeta_1)\omega  \hsmm +\hsmm \bar\lambda \langle \zeta_2,v\rangle. $
\end{itemize}
By applying the Lagrange multiplier rule, the maximum condition with respect to $\omega'$, and $v'$, yields

\vspace{-.4cm}
\begin{equation} -\bar \lambda (\zeta_1,\zeta_2) \in \{\overline H_L^k(v)\} \times\nabla_{v} \overline H_L^k(v)\omega +  \{0\} \times N_{\mathcal V}(v). \label{inclusion hamiltonian-value function} \end{equation}

\vspace{-.2cm}
Clearly, this inclusion corresponds to the last inclusion specifying the generalized gradient of $\varphi^k$ of Proposition \ref{subgrad-value}.
		
These conclusions are extended to pairs $(\omega,v)$ for which $\partial^P \hsmm\varphi^k(\omega,v)$ might be an empty set. Take any $(\zeta_1,\zeta_2)\hsmm \in\hsmm \partial \varphi^k(\omega,v)$, where $\dot t\hsm =\hsm \omega$ and $\dot y\hsm =\hsm v$ a.e., such that $v\hsm \in\hsm \mathcal{V}$, and $\omega\hsm \in\hsm  L_2([0,T^*],\R^+)$, and satisfying the corresponding endpoint and state constraints (i.e., $y(0)\hsm =\hsm y_0$, $y(T^*)\hsm\in\hsm \bar  E$, $ t(0)\hsm =\hsm 0 $, and $h_H(y)\hsm\leq\hsm 0$). From the definition of limiting subdifferential, there exists a sequence $\{(\omega_i,v_i)\}$ converging in $L_2$ to $(\omega,v)$ with $\varphi^k(\omega_i,v_i)$ converging to $\varphi^k(\omega,v)$ by continuity of $\varphi^k$, and $\{(\zeta_1^{i},\zeta_2^{i})\}$ satisfying $(\zeta_1^{i},\zeta_2^{i})\hsmm \in \hsmm\partial^P\hsmm \varphi^k(\omega_i,v_i)$, such that, for a fixed $k$, and, for all $i$, $(\zeta_1^{i},\zeta_2^{i})$ converges  weakly in $L_2$ to $(\zeta_1,\zeta_2)$.
		
By compactness arguments, the sequence $\{(\omega_i,v_i)\}$ with $(\omega_i,v_i)\hsmm\in\hsmm  L_2([0,T^*],\R^+)  \hsmm\times\hsmm \mathcal{V}$ can be chosen
so that $\dot t_i\hsmm =\hsmm \omega_i$, and $\dot y_i\hsmm =\hsmm v_i$ a.e., and the corresponding endpoint and state constraints are satisfied. The analysis above implies that, for each one of such $(\omega_i,v_i)$'s, there exists $(p_H^{i},p_L^{i},\mu_H^{i},\mu_L^{i},\bar\lambda^{i})$ associated with the pair $(x_i,u_i)$ such that conditions (i)-(iii) above hold, and that $(x_i,u_i)$ is an optimal solution to $(P_L^k)$, i.e., $(x_i,u_i) \hsmm\in\hsmm\Psi^k(\omega_i,v_i)$.
		
The sequences $\{p_H^{i}\}, \{p_L^i \}$ are bounded, hence $\{\dot p_H^{i}\}, \{\dot p_L^i\}$ are uniformly integrably bounded (as follows from the adjoint system). Then, there exist subsequences (we do not relabel) $\{p_H^{i}\}, \{p_L^i\}$, uniformly converging to some absolutely continuous arcs respectively, $p_H$, and $p_L$. Moreover, we can extract (without relabeling) subsequences $\{\bar \lambda^{i}\}$, $\{\mu_H^{i}\}$, and $\{\mu_L^{i}\}$ converging, respectively, to some $\bar \lambda \hsmm\ge\hsmm 0$, and to some nonincreasing functions of bounded variation $\mu_H$, and $\mu_L$ such that  $\mu_H$ is constant on $\{t\hsmm\in\hsmm[0,T^*]\hsmm:\hsmm |y- z|\hsmm >\hsmm R_1\, \forall\, z\hsmm\in \hsmm\partial Q\}$, and $\mu_L $ is constant on $\{t\hsm\in\hsm[0,T^*]\hsmm: |y- x|\hsmm <\hsmm R_1\}$.
		
Therefore, from the compactness of the feasible set of $(P_L^k)$, we have that $x_i\to x$ uniformly, and $u_i\to u$ a.e., where $(x,u_L)$ is a feasible process to $(P_L^k)$ associated with $(T,y)$ (or equivalently $(\omega,v)$). Moreover, $(x,u_L)\hsmm\in\hsmm \Psi^k(\omega,v)$, since the graph of $\Psi^k(\cdot ,\cdot)$ is closed (as a result of the closure of the graph of $\mathcal{F}^k(T,y)$, and $(x_i,u_i) \hsmm\in\hsmm \Psi^k(\omega_i,v_i)$). Then, by passing to the limit $i\hsmm\to\hsmm \infty$ in the necessary conditions of optimality and by compactness arguments, the statement of Proposition \ref{subgrad-value} regarding the expression of $\partial \varphi^k(\omega,v)$ holds.

Now, it remains to show that $\varphi^k$ is Lipschitz continuous in its domain. By \cite[Theorem 3.6]{CSW1993subgradientcriteria}, $\varphi^k$ is Lipschitz continuous near $(w,v)$ with rank ${\bf L}$ if all the proximal subgradients in $\partial^P\varphi^k(w',v')$ for all $ (w',v')$ of the domain of $\varphi^k$ near $(w,v)$ are bounded in $L_2$ by the constant ${\bf L}$. This can be easily concluded from the necessary conditions obtained above, and the boundedness of the adjoint variable with simple arguments.

\medskip
		
{\bf Proof of Proposition \ref{propo: ekeland principle}.} Let $\bar \theta= (v,u,u_0,\omega)\in\Theta := {\cal V}\times {\cal U}\times{\cal U}_0\times L_2([0,T^*];\R^+)$.
Under assumptions H1-H6, Proposition \ref{prop: existence of solution bilevel} entails the existence of a solution to $(P_F)$ denoted by $( y^*, x^*, z^*, t^*, v^*, u^*, u_0^*, \omega^*)$.
			
Proposition \ref{prop: uniqueness solution} applied to $(D^k)$ cast in the new time parametrization (i.e., on the fixed time interval $[0,T^*]$), yields the existence of a sequence $\{(x_k(0),\bar \theta_k)\}$, with $(x_k,u_k,u_{0,k},\omega_k)\hsmm\in\hsmm {\cal F}_L^k(t_k(T^*),y_k)$, such that $\bar\theta_k \to \bar\theta^*$, a.e. on $[0, T^*]$, and $\{x_k\}$ is such that $x_k \to x^* $ uniformly, being $x^*$ the unique solution to $(P_L)$ with data $ (x^*(0),y^*,t^*,z^*,\bar\theta^*)$.
			
By observing that the construction of the approximating problem $(\widetilde P_F^k)$ involves only the approximation to $(P_L) $ (i.e., $(P_H)$ remains intact), it is clear that Proposition \ref{prop: uniqueness solution} can be readily applied to $(P_F)$ in the reparameterized time by choosing a control process so that $ z^k(T^*)\hsmm - \hsmm \varphi^{k}(\omega_k,v_k)\leq 0$. This is always possible due to the fact that $ {\mathcal F}_L^k(t_k(T^*),y_k)$, with, $\dis t_k(T^*)= \int_0^{T^*}\hsm\hsm \omega_k(s)ds$, is compact and, thus, the existence of solution to $(P_L^k)$ is guaranteed.
			
Since, it is straightforward to conclude that the cost functional of $(P_F^k)$ is lower semi-continuous, and that $\R^n\hsmm\times\hsmm\Theta $ with $|\cdot|\hsmm +\hsmm \Delta(\cdot,\cdot)$ is a complete metric space, we just need to show that there is a sequence of positive numbers $\{\varepsilon_k\}$ satisfying $\varepsilon_k \to 0 $ as $k \to\infty $ so that $(y^*,x^*,z^*,t^*,\bar\theta^*)$ is a $\varepsilon_k^2$-minimizer to $(P_F^k)$. Let $ \varepsilon_k^2 := |x^*(0)\hsmm -\hsmm x_k(0)| \hsmm +\hsmm \Delta(\bar \theta^*,\bar \theta_k)$. From Proposition \ref{prop: uniqueness solution}, it is clear that the sequence $\{\varepsilon_k\} $ satisfies the needed requirements. Thus, Proposition \ref{propo: ekeland principle} follows immediately (for $k$ sufficiently large, $\varepsilon_k < 1$), from Ekeland's Variational Principle.
		
\medskip

{\bf Proof of Proposition \ref{Prop-part-cal}.} Let $(T^*,y^*,x^*,u^*,z^*,u^*_0)$ be a minimizer to $(P_F)$. Then, the partial calmness, cf. Definition \ref{def:partial calmness}, implies that $\exists \rho\hsmm \geq\hsmm 0$, such that, for any feasible control process $(y,x,z,t,\bar\theta)$ of $(P_F)$, we have

\vspace{-.4cm}
\begin{equation} \label{partial calmness} J_H(t^*(T^*),y^*;x^*(0),u_L^*)\leq  J_H(t(T^*),y;x(0),u_L)\hsmm +\hsmm \rho (z(t(T^*))\hsmm -\hsmm \varphi(\omega,v)). \end{equation}

\vspace{-.2cm}
Consider now $(\widetilde P_F^k)$, with $\chi_k=(y_k,x_k,z_k,t_k)$, and $\bar \theta_k = (v_k,u_k,u_{0,k},\omega_k)$ as its state and control variables, respectively. We preserve the notation in the limit by dropping the subscript $k$ and refer to the solution by adding superscript ``*'' to either these variables or their components. Let us assume that $(\widetilde P_F^k)$ is not partially calm. Then, for its minimizer (see Proposition \ref{propo: ekeland principle}), $(\chi^*_k,\bar \theta^*_k)$, there exists a nonnegative sequence $\{\beta_k\}$ converging to some $\beta \hsmm\ge\hsmm 0$, and a control process $(\chi_k,\bar\theta_k)$ feasible to $(\widetilde P_F^{k})$, such that $|x^*(0)\hsmm - \hsmm x_k(0)|\hsmm +\hsmm \Delta (\bar\theta^*,\bar\theta_k) < \varepsilon_k $, and satisfying

\vspace{-.5cm}
\begin{equation}
t^*_k(T^*) > t_k(T^*) + \varepsilon_k\left( |x_k^*(0) \hsmm -\hsmm x_k(0)| \hsmm + \hsmm \Delta (\bar\theta_k^*,\bar\theta_k)\right) \hsmm + \hsmm \beta_k\left(z_k(t_k(T^*))\hsmm -\hsmm \varphi(\omega_k,v_k)\right).\label{partial calmness ineq}
\end{equation}

\vspace{-.2cm}
Moreover, from Proposition \ref{propo: ekeland principle}, we have that the sequence of approximating control processes $\{(\chi^*_k,\bar\theta_k^*)\}$ satisfies

\vspace{-.7cm}
\begin{eqnarray}&& \chi_k^* \to \chi^*  \textrm{ uniformly, and }\vsm\label{convergence trajectories} \\
&& \bar \theta_k^*  \to \bar\theta^* \textrm{ a.e.}\vsm\label{convergence control}
\end{eqnarray}

\vspace{-.3cm}
Since, Proposition \ref{propo: ekeland principle} holds for all $(\chi_k,\bar\theta_k)$ feasible for $(\widetilde P_F^k)$ such that $ |x^*(0)\hsmm - \hsmm x_k(0)| + \Delta(\bar \theta^*,\bar\theta_k) < \varepsilon_k, $
we have that (\ref{convergence control}) asserted for $\{\bar\theta_k^*\}$ also holds for $\{\bar \theta_k\}$. Furthermore, since $ x_k(0)\to x^*(0)$, and,
in the light of the convergence $[0,T^*]$-a.e. of $\{\bar \theta_k\}$ to $\bar \theta^*$, straightforward arguments lead us to conclude that (\ref{convergence trajectories}) also holds for  $\{\chi_k\}$.

Now, since $|x_k^*(0)- x_k(0)|\hsmm\leq\hsmm |x_k^*(0)- x^*(0)|+|x^*(0)-x_k(0)|$, $ \Delta(\bar \theta^*_k,\bar\theta_k)< \Delta(\bar\theta_k^*,\bar\theta^*) \hsmm + \hsmm \Delta(\bar \theta^*,\bar\theta_k)$, and that, by definition $ z_k(t_k(T^*))\hsmm \geq\hsmm \varphi(\omega_k,v_k)$, we conclude, by passing to the limit, while choosing $\beta \hsmm \geq \hsmm \rho$ without any loss of generality, that $ \dis \lim_{k\to\infty} t_k^*(T^*)\hsmm > \hsmm \lim_{k\to\infty} t_k(T^*)$. This is a contradiction since $ t_k^*(T^*)\hsmm \leq \hsmm  t_k(T^*)$ for all $ k$. Thus, $(\widetilde P_F^k)$ has to be partially calm.


\end{document}